\newcommand{\Aut}[1]{\mathrm{Aut}({#1})}
\DeclareMathOperator\UCA{UCA}
\DeclareMathOperator\CA{CA}
\DeclareMathOperator\UCAN{UCAN}
\DeclareMathOperator\CAN{CAN}
\newtheorem{theorem}{Theorem} %[section]
\newtheorem{conjecture}{Conjecture} %[section]
\newtheorem{corollary}[theorem]{Corollary}
\newcommand{\ignore}[1]{}
\begin{document}

\title{On the Structure of Small Strength-$2$ Covering Arrays}

\author{Janne I. Kokkala\footnote{Supported by the Aalto ELEC Doctoral School, Nokia Foundation, and Academy of Finland, Project \#289002. Present address:  Department of Theoretical Computer
Science, School of Electrical Engineering and Computer Science, KTH
Royal Institute of Technology, SE-100 44 Stockholm, Sweden.} \\
Department of Communications and Networking\\
Aalto University School of Electrical Engineering\\
P.O.\ Box 15400, 00076 Aalto, Finland
\and
Karen Meagher\thanks{Supported in part by an NSERC discovery grant.}\\
Department of Mathematics and Statistics\\
University of Regina\\
Regina, SK, S4S 0A2\\
Canada
\and
Reza Naserasr\footnote{ANR-17-CE40-0022}\\
Institut de Recherche en Informatique Fondamentale \\
B\^{a}timent Sophie Germain\\
8 place Aur\'{e}lie Nemours\\
75013 Paris, France
\and
Kari J. Nurmela\footnote{Present address: Mankkaanmalmi 8 A, 02180 Espoo, Finland}, Patric R. J. \"Osterg\aa rd\footnote{Supported in part by the Academy of Finland, Project \#289002.}\\
Department of Communications and Networking\\
Aalto University School of Electrical Engineering\\
P.O.\ Box 15400, 00076 Aalto, Finland
\and
Brett Stevens\thanks{Supported in part by an NSERC discovery grant.}\\
School of Mathematics and Statistics\\
Carleton University\\
1125 Colonel By Drive\\
Ottawa, ON, K1S 5B6  \\
Canada
}

\date{ }

\maketitle

\begin{abstract}
  A covering array $\CA(N;t,k,v)$ of strength $t$ is an $N \times k$
  array of symbols from an alphabet of size $v$ such that in every
  $N \times t$ subarray, every $t$-tuple occurs in at least one row.
  A covering array is \emph{optimal} if it has the smallest possible
  $N$ for given $t$, $k$, and $v$, and \emph{uniform} if every symbol
  occurs $\lfloor N/v \rfloor$ or $\lceil N/v \rceil$ times in every
  column.  Prior to this paper the only known optimal covering arrays
  for $t=2$ were orthogonal arrays, covering arrays with $v=2$
  constructed from Sperner's Theorem and the Erd\H{o}s-Ko-Rado
  Theorem, and eleven other parameter sets with $v>2$ and $N >
  v^2$. In all these cases, there is a uniform covering array with the
  optimal size. It has been conjectured that there exists a uniform
  covering array of optimal size for all parameters.  In this paper a
  new lower bound as well as structural constraints for small uniform
  strength-$2$ covering arrays are given. Moreover, covering arrays
  with small parameters are studied computationally.  The size of an
  optimal strength-$2$ covering array with $v > 2$ and $N > v^2$ is
  now known for $21$ parameter sets.  Our constructive results
  continue to support the conjecture.
\end{abstract}

\section{Introduction}\label{intro}

A covering array $\CA(N;t,k,v)$ of strength $t$ is an $N \times k$
array of symbols from an alphabet of size $v$ such that in every
$N \times t$ subarray, every $t$-tuple occurs in at least one row.  We
will use $Z_v = \{0,1,\ldots ,v-1\}$ as the alphabet for all of our
covering arrays. A covering array is \emph{optimal} if it has the
smallest possible $N$ for given $t$, $k$, and $v$, and \emph{uniform}
if every symbol occurs either $\lfloor N/v \rfloor$ or $\lceil N/v \rceil$
times in every column. A uniform $\CA(N;t,k,v)$ is denoted by
$\UCA(N;t,k,v)$.  The smallest value of $N$ for which a $\CA(N;t,k,v)$
(respectively $\UCA(N;t,k,v)$) exists is denoted by $\CAN(t,k,v)$ (respectively
$\UCAN(t,k,v)$).

Covering arrays are extensively studied designs with many
applications. There are several surveys of covering arrays
\cite{col0,hart0,law0}; for more recent studies see
\cite{akh0,col3,mr3440525,francetic2016, mr3565209,tor2,tza1}.
Uniform covering arrays are particularly useful since they are used in
some constructions to create larger covering arrays
\cite{colbourn_asymptotic_nodate,francetic2016, sarkar_upper_2017,
  MR3328867}. In this work, we only consider strength-$2$ covering
arrays; thus we omit the parameter $t$ for brevity, and write
$\CA(N;k,v)$ and $\UCA(N;k,v)$ instead of $\CA(N;2,k,v)$ and
$\UCA(N;2,k,v)$, respectively. We also use $\CAN(k,v)$ and $\UCAN(k,v)$
for $\CAN(2,k,v)$ and $\UCAN(2,k,v)$.

A covering array with $N=v^2$ is an orthogonal array and it is
necessarily both uniform and optimal.  For $v=2$ it is known (\cite{katona, kleitmanspencer}) that $\CAN(k,2) = n$, where
\[
{n-2 \choose \lceil (n-1)/2  \rceil} < k \leq {n-1 \choose \lceil n/2  \rceil}.
\]
Moreover, for $v=2$ and all $k$, there is a uniform
covering array of optimal size, so $\UCAN(k,2) = \CAN(k,2)$ (this is a
consequence of a graph homomorphism and graph core
result~\cite[Theorem 5]{mea0}).

Prior to this work, as far as we have been able to verify, the other optimal values known when $N > v^2$ were
$\CAN(5,3) = 11$, $\CAN(6,3) = \CAN(7,3) = 12$,
$\CAN(8,3) = \CAN(9,3) = 13$, $\CAN(10,3) = 14$,
$\CAN(6,4) = 19$, $\CAN(7,4) = 21$, $\CAN(7,5) = 29$, $\CAN(4,6) = 37$, and $\CAN(5,6) = 39$
(see Table~\ref{tab:res} in the current paper for references).  In all these cases, there exists an optimal covering array that is also uniform. In fact, to date there has not been a single set of parameters found for which none of the optimal covering
arrays is uniform. This has led the second and sixth author of the
current paper to make the following conjecture \cite[Conjecture
1]{mea0}.

\begin{conjecture} \label{conj:uca}
If there exists a $\CA(N;k,v)$ then there exists a $\UCA(N;k,v)$. 
\end{conjecture}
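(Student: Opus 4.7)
The plan is to attempt a local-modification argument on an arbitrary $\CA(N;k,v)$. Define the \emph{discrepancy} of an array $A$ to be
$\sum_{c=1}^{k}\sum_{s\in Z_v}\max\{0,f_{c,s}-\lceil N/v\rceil\}$,
where $f_{c,s}$ is the number of occurrences of symbol $s$ in column $c$. A uniform covering array is precisely one of discrepancy zero, so it suffices to show that any $\CA(N;k,v)$ of positive discrepancy admits an entry change that preserves coverage while strictly decreasing the discrepancy; iterating this to completion then yields a $\UCA(N;k,v)$.

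First, I would fix a column $c$ and symbols $s,s'$ with $f_{c,s}>\lceil N/v\rceil$ and $f_{c,s'}<\lfloor N/v\rfloor$, and try to find a row $r$ with $A[r,c]=s$ whose entry can be reset to $s'$ without damage. For this single-cell swap to preserve coverage, each pair $(s,A[r,c'])$ with $c'\ne c$ must already appear in some other row. A row $r$ containing $s$ in column $c$ contributes exactly $k-1$ pairs of the form $(s,\cdot)$ across the remaining columns; since there are only $v(k-1)$ such pairs in total and $f_{c,s}$ rows contribute them, when $f_{c,s}$ exceeds $N/v$ the average multiplicity of coverage of each $(s,\cdot)$-pair is bounded below, and a counting argument would attempt to extract one row all of whose $(s,\cdot)$-pairs are redundant.

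In parallel, I would pursue the homomorphism/core perspective that settles the $v=2$ case in \cite{mea0}. A $\CA(N;k,v)$ can be encoded as a map from a $k$-set into a qualitative-independence structure on $Z_v^N$, and uniformity becomes an equidistribution condition. For $v=2$ the target graph has a vertex-transitive core, which forces a uniform optimal solution to exist. For general $v$ I would look for a group $G$, for instance the cyclic group $Z_v$ acting diagonally on columns after an independent per-column relabelling, whose averaging action on the set of optimal arrays produces a uniform fixed point; equivalently, one would try to establish vertex-transitivity of the core of the relevant hypergraph.

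The main obstacle is exactly where both strategies stall. A single swap in column $c$ typically destroys coverage in many other columns at once, so the greedy step above must be replaced by a coordinated multi-cell rearrangement, and identifying the correct generalized ``move'' is the combinatorial heart of the problem. Moreover, for $v>2$ the qualitative-independence object is a hypergraph rather than a graph; its core is poorly understood and need not be vertex-transitive, so the clean homomorphism argument of \cite{mea0} does not obviously extend. I therefore expect the conjecture to resist a direct proof, and the most honest partial progress to consist of reducing the statement to a precise structural question about cores of these hypergraphs, complemented by computational verification of further parameter sets as done in the present paper.
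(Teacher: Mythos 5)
The statement you were asked about is Conjecture~\ref{conj:uca}; the paper does not prove it, and neither do you. There is therefore no proof to compare against: the paper's contribution here is \emph{evidence}, namely a complete classification for a range of small parameters showing that in every case where $\CAN(k,v)$ is determined (now $21$ parameter sets with $v>2$ and $N>v^2$) a uniform array of optimal size exists, together with the lower bound of Theorem~\ref{main_bound_thm} for uniform arrays. Your proposal is candid about stalling, which is the right assessment, but it should be stated plainly that what you have written is a research plan, not a proof, and that the conjecture remains open.

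Two concrete defects in the plan itself are worth flagging. First, your discrepancy function $\sum_{c}\sum_{s}\max\{0,f_{c,s}-\lceil N/v\rceil\}$ does not characterize uniformity: with $v=3$, $N=10$ a column with symbol counts $(4,4,2)$ has discrepancy $0$ yet violates the requirement that every count be $\lfloor N/v\rfloor$ or $\lceil N/v\rceil$; you must penalize deficits below the floor as well, e.g.\ via $\sum_{c,s}|f_{c,s}-N/v|$. Second, the averaging argument for the single-cell swap does not close: a row $r$ with $A[r,c]=s$ can be safely rewritten only if \emph{all} $k-1$ of its pairs $(s,A[r,c'])$ are covered elsewhere, and an average redundancy bound on pairs does not produce a single row that is simultaneously redundant in every column --- this is exactly the coordination problem you concede, and it is the whole difficulty. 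On the homomorphism route, a small correction: for strength $2$ the qualitative independence structure of \cite{mea0} is still a graph for every $v$ (vertices are partitions, adjacency is pairwise qualitative independence); the obstruction for $v>2$ is not that the object becomes a hypergraph but that the core of this graph is not known to lie in the uniform part, whereas for $v=2$ it is a Kneser graph and the core argument goes through. Reducing the conjecture to a statement about that core is a reasonable open problem to isolate, but it is a reformulation, not progress toward a proof.
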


Recently, Torres-Jimenez~\cite{tor1} found examples of optimal, but
not uniform, covering arrays with the additional property that the
array has the maximum number of columns (maximum $k$) for the given
number of rows (given $N$).  One generalization of covering arrays is
covering arrays avoiding forbidden edges where certain pairs of
symbols in certain columns are forbidden
\cite{danziger_covering_2009}. There exists an arc-transitive
4-partite graph where the unique optimal covering array avoiding the
edges of the graph cannot be uniform \cite{set_nonu}.  This does not
refute the conjecture but it does show that placing even highly
symmetric constraints on covering arrays can force non-uniformity of
optimal arrays.

An analogous problem has also been studied for covering and packing (error-correcting) codes. For binary covering codes, there are sets of parameters for which all optimal codes are nonuniform \cite{ost0}. For binary error-correcting codes, there are even sets of parameters for which all optimal codes have a nonuniform distribution of coordinate values in all coordinates \cite{ost}.

The main challenge in studying Conjecture~\ref{conj:uca}---in searching
for a counter\-example---is to determine $\CAN(k,v)$. This can be done via a
lower bound and a constructive upper bound that meet. 
There are some well-known constructions for covering arrays.
Specific covering arrays can be found by metaheuristic search
techniques~\cite{coh1, nur}, using constraint programming
models~\cite{MR2224851} or by applying post-optimization techniques to
known constructions~\cite{MR2974273}. In practice, however,
strong enough bounds are in general available only for limited sets of
parameters \cites{ste,MR2224851}

A new lower bound on the size of covering arrays is proved in this paper.
Analytical methods can be augmented with computational techniques, which
will be utilized in the current work to determine $\CAN(k,v)$
up to the limits set by the available algorithms and computational resources.

In this paper, new lower bounds and structural constraints on uniform
covering arrays are given in Section~\ref{mathSec}. Computational
methods, including exhaustive search and classification procedures,
are described in Section~\ref{procSec}. Equivalent covering arrays can
be made from permuting columns, rows, or symbols. A central aspect of efficient
exhaustive search in general and classification in particular is avoiding
finding different copies of the same array; this process is called {\em isomorph rejection}.
These concepts are discussed in more detail in Section~\ref{procSec}. An extensive
table of classification results is given in Section~\ref{comp_res}.
Finally, our computational results are discussed in
Section~\ref{resultsSec}, which also contains updated tables of bounds
on $\CAN(k,v)$ and $\UCAN(k,v)$ for $4 \leq k \leq 10$ and
$3 \leq v \leq 6$.

\section{Bounds for small covering arrays} \label{mathSec}

\subsection{A lower bound for uniform covering arrays}

The following theorem can be used to get a lower bound on the size
of a uniform covering array.

\begin{theorem}\label{main_bound_thm}
Let $C$ be a $\UCA(N;k,v)$.  Let $d = \lfloor N/v \rfloor$ and $i = N-vd$. Then 
%\[
%2vN(N-1) \geq k\left ( 2(N-i)(N-v+i) - (k-1)(N-v^2+v-i)(N-v^2+i)
%\right ),
%\]
\[
(k^2-3k+2v)N^2 - v(k(2v-1)-2)(k-1)N + k(k(v^4-v^3+vi-i^2)  - (v^4 - v^3 + 3vi - 3i^2) ) \geq 0  
  \]
and a necessary condition for equality is that every pair of rows in $C$ agree in at least one and at most two columns.
\end{theorem}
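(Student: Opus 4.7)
My plan is to double count agreements between pairs of rows and to apply the elementary quadratic inequality $(a_{rs}-1)(a_{rs}-2) \ge 0$, which holds for every non-negative integer $a_{rs}$, with equality if and only if $a_{rs}\in\{1,2\}$. Here $a_{rs}$ denotes the number of columns in which rows $r$ and $s$ of $C$ agree; for distinct columns $c_1,c_2$ and symbols $v_1,v_2\in Z_v$, let $n_{(v_1,v_2)}$ denote the number of rows of $C$ with symbol $v_j$ in column $c_j$ (suppressing the dependence on $c_1,c_2$). Summing the quadratic inequality over all unordered pairs of rows gives
\[
0 \le \sum_{r<s}(a_{rs}-1)(a_{rs}-2) = 2\sum_{r<s}\binom{a_{rs}}{2} - 2\sum_{r<s}a_{rs} + N(N-1),
\]
so the theorem reduces to producing an exact formula for $\sum a_{rs}$ and an upper bound for $\sum\binom{a_{rs}}{2}$.

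First, I would compute $\sum_{r<s}a_{rs}$ exactly by counting agreements column by column. Uniformity implies that each column contributes $i\binom{d+1}{2}+(v-i)\binom{d}{2}$ agreeing pairs, so summing over the $k$ columns and simplifying using $N=vd+i$ yields $\sum_{r<s}a_{rs} = k(N^2-Nv+vi-i^2)/(2v)$.

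Next, I would use the double-counting identity $\sum_{r<s}\binom{a_{rs}}{2}=\sum_{c_1<c_2}\sum_{v_1,v_2}\binom{n_{(v_1,v_2)}}{2}$ to bound the left-hand side. The covering property forces $n_{(v_1,v_2)}\ge 1$, so $m_{(v_1,v_2)}:=n_{(v_1,v_2)}-1$ is a non-negative integer matrix whose row sums and column sums take values in $\{d-v,d-v+1\}$ (inherited from the uniformity of columns $c_1,c_2$), with exactly $i$ rows and $i$ columns attaining the larger value. Since $m\ge 0$, each row satisfies the crude bound $\sum_{v_2}m_{(v_1,v_2)}^2\le(\sum_{v_2}m_{(v_1,v_2)})^2$; summing over $v_1$ gives $\sum m^2 \le v(d-v)^2 + 2i(d-v) + i$. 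Converting back to $n$ via $\sum n = N$ yields the per-column-pair bound $\sum\binom{n_{(v_1,v_2)}}{2}\le[N-v^2+v(d-v)^2+2i(d-v)+i]/2$, and summing over the $\binom{k}{2}$ column pairs produces the desired upper bound on $\sum\binom{a_{rs}}{2}$.

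Substituting both ingredients into the displayed inequality, multiplying through by $2v$, and using the identity $v(d-v)^2 + 2i(d-v) + i = [(N-v^2)^2 + i(v-i)]/v$ (an immediate consequence of $N=vd+i$), then collecting powers of $N$, should yield exactly the stated quadratic inequality. The equality claim follows because equality in the theorem forces equality in the step $(a_{rs}-1)(a_{rs}-2)\ge 0$ for every pair of rows, which is precisely the condition $a_{rs}\in\{1,2\}$. I anticipate the main obstacle is simply the algebraic bookkeeping in this last step: verifying that the coefficients collect into $k^2-3k+2v$, $-v(k-1)(k(2v-1)-2)$, and $k(k-1)v^3(v-1) + k(k-3)i(v-i)$, with the slightly surprising mixed constant $k(k-3)i(v-i)$ arising as $k(k-1)i(v-i) - 2ki(v-i)$ when the three contributions combine.
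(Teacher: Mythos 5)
Your proposal is correct and follows essentially the same route as the paper: your pairwise inequality $(a_{rs}-1)(a_{rs}-2)\ge 0$ is exactly the paper's truncated inclusion--exclusion bound $M_1-M_2\le\binom{N}{2}$, your column-by-column count of $\sum a_{rs}$ matches its $M_1$ computation, and your concentration bound on $\sum m_{(v_1,v_2)}^2$ via $n_{(v_1,v_2)}\ge 1$ reproduces its per-column-pair bound $i\binom{d+2-v}{2}+(v-i)\binom{d+1-v}{2}$ on $M_2$, with the same equality analysis. The coefficient bookkeeping you anticipate does work out as stated, so no gap remains.
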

\begin{proof}
Let $C$ be a $\UCA(N;k,v)$, $d = \lfloor N/v \rfloor$, and $i = N-vd$.
To arrive at the inequality, we will find an upper and a lower bound
for the number of pairs of rows which agree in at least one
position. An upper bound is the total number of pairs of rows, ${N
  \choose 2}$.  

To get a lower bound on the number of pairs of rows we introduce two
new parameters. Define $M_1$ to be the number of triples $(r,r',c)$
for which rows $r$ and $r'$ agree in column $c$. Further define $M_2$
to be the number of quadruples $(r,r',c,c')$ for which rows $r$ and
$r'$ agree in columns $c$ and $c'$.  Then $M_1 -M_2$ is a lower bound
on the number of pairs of rows which agree in at least one position.
(Indeed, $M_1 -M_2$ consists of the first two terms in the summation
using the principle of inclusion and exclusion.)

This gives us the bound
\begin{equation}
M_1 - M_2 \leq \binom{N}{2}, \label{eq:m1m2bound}
\end{equation}
which is tight if and only if every pair of rows in $C$ agree in at least one and at most two columns.

Since the array is uniform, in every column there are $i$ symbols
which appear $d+1$ times and $v-i$ symbols which appear only $d$
times.  Thus the contribution to $M_1$ from any column is
$i{d+1 \choose 2} + (v-i){d \choose 2}$ and the sum of these over all
columns is
\begin{equation}
M_1 = k \left( i{d+1 \choose 2} + (v-i){d \choose 2} \right) = \frac{kd(N-v+i)}{2}. \label{eq:m1}
\end{equation}

Next, we find an upper bound for $M_2$. Consider columns $c$ and $c'$. 
Let $\lambda_{x,y}^{c,c'}$ be the number of rows $r$ such that
$C_{r,c}=x$ and $C_{r,c'}=y$, and let $\mu_x^{c,c'}$ be the number of
pairs of rows $r$ and $r'$ such that $C_{r,c}=C_{r',c}=x$ and
$C_{r,c'}=C_{r',c'}$. It follows from the definition that %%%
\begin{equation}
\mu_{x}^{c,c'}=\sum_y \binom{\lambda_{x,y}^{c,c'}}{2}. \label{eq:mux_lambdaxy}
\end{equation}
The number of pairs of rows that agree in columns $c$ and $c'$ is then $\sum_x \mu_x^{c,c'}$.

For each $x$, let $m_x^{c}$ be the number of times $x$ occurs in
column $c$. Since $\sum_y \lambda_{x,y}^{c,c'} = m_x^c$ and
$\lambda_{x,y}^{c,c'} \geq 1$ for all $y$, it can be seen that
Equation~\eqref{eq:mux_lambdaxy} is maximized for each $x$ when there
is a $y_x$ such that $\lambda_{x,y_x}^{c,c'} = m_x^c + 1-v$, and
$\lambda_{x,y}^{c,c'}=1$ for all $y \neq y_x$. This gives
\begin{equation}
\sum_x \mu_{x}^{c,c'} \leq \sum_x \binom{m_x^c + 1-v}{2} = i \binom{d+2-v}{2} + (v-i) \binom{d+1-v}{2}. \label{eq:mux}
\end{equation}
The last equality follows from the fact that $i$ symbols occur $d+1$
times in column $c$, and $v-i$ symbols occur $d$ times in column
$c$. This bound is attained if and only if there is a permutation
$\pi$ of $\{0,1,\dots,v-1\}$ such that the number of times $x$ appears
in column $c$ equals the number of times $\pi(x)$ appears in column
$c'$ for each $x$, and $\lambda_{x,y} = 1$ whenever $y \neq \pi(x)$.

Summing \eqref{eq:mux} over all pairs of columns gives us an upper bound for $M_2$, 
\begin{equation}
M_2 = \sum_{c,c'} \sum_x \mu_{x}^{c,c'} \leq \frac{k(k-1)(d+1-v)(N-v^2 + i)}{4}.  \label{eq:m2}
\end{equation}

Applying \eqref{eq:m1} and \eqref{eq:m2} to \eqref{eq:m1m2bound} and
multiplying both sides by $4v$ yields the bound from the theorem.
\end{proof}

Theorem~\ref{main_bound_thm} is particularly useful for small $k$.

\begin{corollary}\label{v+2_cor}
  If there exists a $\UCA(N;v+2,v)$, then $N \geq v^2 + v -1$.
  Further, if $N=v^2+v-1$, then every pair of rows must agree in
  either one or two positions, and in each pair of
  columns there are exactly $v-1$ disjoint pairs of symbols that
  appear twice.
\end{corollary}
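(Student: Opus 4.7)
The plan is to specialize Theorem~\ref{main_bound_thm} to $k = v+2$ and analyze the resulting polynomial inequality. Since a $\UCA(N;v+2,v)$ is in particular a covering array, which forces $N \geq v^2$, establishing the bound $N \geq v^2 + v - 1$ reduces to ruling out the finite set of candidates $N \in \{v^2, v^2 + 1, \ldots, v^2 + v - 2\}$. Each such $N$ can be written as $N = v^2 + i$ with $i \in \{0, 1, \ldots, v-2\}$, so $d = v$ throughout.

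I would substitute $k = v+2$, $d = v$, and $N = v^2 + i$ into the inequality from Theorem~\ref{main_bound_thm} and simplify. After expansion, the $v^6$, $v^5$, and constant-in-$v$ terms should cancel, leaving a polynomial I expect to take the clean factored form $2v(i - (v-1))(i + v^2)$. Since $i + v^2 > 0$ always, this factorization would show that the inequality is strictly violated for $i \in \{0, 1, \ldots, v-2\}$ and achieves equality precisely at $i = v - 1$, i.e., at $N = v^2 + v - 1$. This gives the lower bound and identifies the only $N$ in the relevant range at which Theorem~\ref{main_bound_thm} can be tight.

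For the structural part, I would invoke the two equality conditions exposed in the proof of Theorem~\ref{main_bound_thm}. Tightness of \eqref{eq:m1m2bound} gives immediately that every pair of rows must agree in either one or two columns. Tightness of \eqref{eq:mux} in every column-pair $(c,c')$ produces a permutation $\pi$ of $Z_v$ for which the number of occurrences of $x$ in column $c$ equals the number of occurrences of $\pi(x)$ in column $c'$, with $\lambda^{c,c'}_{x,y} = 1$ whenever $y \neq \pi(x)$. With $d = v$ and $i = v-1$, we have $m_x^c + 1 - v = 2$ for the $v-1$ frequent symbols and $m_x^c + 1 - v = 1$ for the unique infrequent symbol; hence exactly $v-1$ of the pairs $(x,\pi(x))$ occur twice, and these pairs are disjoint because $\pi$ is a bijection.

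The chief obstacle is obtaining the compact factorization in the middle paragraph rather than a case-by-case comparison in $i$; once this is in hand, both assertions of the corollary drop out of Theorem~\ref{main_bound_thm} without further work.
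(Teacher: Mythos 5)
Your proposal is correct and is exactly the intended derivation: the paper states the corollary as a direct specialization of Theorem~\ref{main_bound_thm}, and your substitution $k=v+2$, $N=v^2+i$, $d=v$ does yield the factorization $2v\bigl(i-(v-1)\bigr)\bigl(i+v^2\bigr)$ (I checked the coefficients: $2v$ for $i^2$, $2v(v^2-v+1)$ for $i$, and $-2v^3(v-1)$ for the constant), which forces $i=v-1$ and gives equality there. The structural claims then follow, as you say, from tightness of \eqref{eq:m1m2bound} and of \eqref{eq:mux} in every column pair, the bijectivity of $\pi$ giving the disjointness of the $v-1$ doubled pairs.
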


Similarly we can apply Theorem~\ref{main_bound_thm} to covering arrays with few columns.
\begin{corollary}\label{v+3_cor}
Assume that there exists a $\UCA(N;v+j,v)$.
\begin{enumerate}
\item If $j=3$, then $N > v^2 + 3v/2 -5/2$. If additionally $2 < v \leq 11$, then $N > v^2 + 3v/2 -2$. 
\item If $j=4$ then, $N > v^2 + 2v -5$.  If additionally $v \leq 6$, then $N \geq v^2 + 2v - 4$. 
\item If $j=5$, then $N > v^2 + 7v/3 - 13/2$.  
\item If $j=6$, then $N > v^2 + 8v/3 - 21/2$.  
\item If $j=7$, then $N > v^2 + 3v - 15$.
  \end{enumerate}
\end{corollary}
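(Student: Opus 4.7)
The plan is to specialize Theorem~\ref{main_bound_thm} to $k = v+j$ for each $j \in \{3,4,5,6,7\}$ and to solve the resulting quadratic inequality in $N$. Rewriting the constant term of the inequality as
\[
C(i) = k(k-1)v^3(v-1) + k(k-3)\,i(v-i),
\]
makes the dependence on $i$ explicit: since $k = v+j \geq 4$ in all cases considered, the coefficient $k(k-3)$ is strictly positive, so $C(i)$ is increasing in the quantity $i(v-i)$.

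Because the leading coefficient $A = (v+j)^2 - 3(v+j) + 2v$ is positive, the quadratic $AN^2 - BN + C(i) \geq 0$ fails exactly on the interval $(N_-(i), N_+(i))$ between its two real roots. To obtain a bound uniform in the residue $i = N \bmod v$, we use the elementary inequality $i(v-i) \leq v^2/4$ to replace $C(i)$ by its maximum, which in turn minimizes the upper root $N_+(i)$; then any integer $N$ strictly below this minimum lies in the forbidden interval for every residue, and in particular for $i = N \bmod v$, so such $N$ is excluded. Applying the quadratic formula with $k = v+j$ for each of $j = 3,\ldots,7$ and simplifying the resulting radical yields the stated closed-form lower bounds $v^2 + 3v/2 - 5/2$, $v^2 + 2v - 5$, $v^2 + 7v/3 - 13/2$, $v^2 + 8v/3 - 21/2$, and $v^2 + 3v - 15$, respectively, as lower estimates of $\min_i N_+(i)$. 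The strictness $N > \text{(bound)}$ follows by combining integrality of $N$ with a direct check at the boundary value.

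For the sharpened bounds in Parts 1 and 2, valid only in the ranges $2 < v \leq 11$ and $v \leq 6$, we carry out a finer analysis by residue class. For each $i \in \{0,1,\ldots,v-1\}$, compute $N_+(i)$ from the exact value of $i(v-i)$, list the integers $N$ with $N \equiv i \pmod v$ that lie in $(N_-(i), N_+(i))$, and take the maximum of these over all residues. In the stated ranges this maximum matches the proposed sharper bound; outside the ranges, some integer at the proposed improved bound either equals $N_+(i)$ exactly for some residue or falls outside every forbidden interval, and the sharper claim is no longer valid.

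The main obstacle is algebraic bookkeeping: for each $j$ we must expand the quadratic, compute its discriminant, and simplify the radical so that the resulting closed-form lower bound on $\min_i N_+(i)$ matches the polynomial expression in the corollary. A minor but necessary auxiliary check is that the discriminant $B^2 - 4AC$ remains non-negative so that the forbidden interval is actually non-empty, which is routine in the regime under consideration where $k = v+j$ is small compared with $v^2$.
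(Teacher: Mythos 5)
Your overall plan is the right one and is what the paper intends (the paper offers no written proof beyond ``apply Theorem~\ref{main_bound_thm} with $k=v+j$''): rewrite the constant term as $k(k-1)v^3(v-1)+k(k-3)i(v-i)$, use $i(v-i)\le v^2/4$ to get the residue-independent bounds, and do an exact residue-by-residue check for the sharpened bounds at small $v$ (this finite check is genuinely needed: e.g.\ for $j=3$, $v=12$ the quadratic vanishes exactly at $N=160=v^2+3v/2-2$, which is why the improvement stops at $v\le 11$). The direction of the estimate is also correct: enlarging $C(i)$ to its maximum shrinks the forbidden interval, so a bound obtained from $C_{\max}$ is valid for every residue.

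There is, however, one step that fails as written. You assert that any integer $N$ below $\min_i N_+(i)$ ``lies in the forbidden interval,'' but being below the upper root does not place $N$ inside $(N_-(i),N_+(i))$: it could lie at or below the lower root, where the inequality of Theorem~\ref{main_bound_thm} holds and nothing is contradicted. To close this you must also show $N>N_-(i)$. The natural way is to invoke $N\ge v^2$ (any covering array with at least two columns contains all $v^2$ pairs) and then check that the quadratic is negative at $N=v^2$; a short computation gives the value $2v^3(v-1)(v+1-k)+k(k-3)i(v-i)$ there, which is indeed negative for $k\ge v+3$ and all admissible $i$, so $N_-(i)<v^2\le N$ and the whole segment $[v^2,N_0]$ is forbidden. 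Your stated ``auxiliary check'' that the discriminant is nonnegative is not a substitute: a nonempty forbidden interval lying entirely above $N_0$ or entirely below $v^2$ would make the argument vacuous. A secondary point of rigour: the radical does not ``simplify'' to the stated polynomials. What one actually verifies is that the quadratic (with $C_{\max}$) is negative at the claimed bound; for instance at $N_0=v^2+3v/2-5/2$ with $k=v+3$ the high-order terms cancel and the value is $\tfrac{v}{4}(-6v^2-15v+25)<0$ for $v\ge 2$. Stating the computation this way is what makes each of the five cases checkable.
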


In the previous corollary, there are similar improvements possible in the constant term in the lower
bound on $N$ when $v$ is sufficiently small for $j \geq 5$. We only
state the improved bounds for $j=3$ and 4.

The form of the bound in Theorem~\ref{main_bound_thm} does not let us
easily identify its behaviour as a function of $k$, but, by losing the
accuracy given by the residue of $N \bmod v$, we can obtain a weaker
bound that has a more directly computable form.

\begin{corollary}\label{simpler_cor}
  Let
  \begin{align*}
    b &= (2v-3)k^2 + (-2v+5)k +(-4v + 2),\\
    a &= 2k^2 - 6k + 4v, \mbox{ and}\\
    D &= k^4 +(8v^2-16v+2)k^3 + (-8v^3+24v-3)k^2 +(8v^3 -8v^2 -8v -4)k  + 4.
    \end{align*}
If there exists a $\UCA(N;k,v)$, then 
{\begin{equation}\label{simple_ineq}
N \geq v\left(\frac{b + \sqrt{D }}{a}\right )
\end{equation}}
\end{corollary}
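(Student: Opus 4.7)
The plan is to reduce the stated bound to the $i=0$ instance of Theorem~\ref{main_bound_thm} applied to a natural extension of the given array. Write $A = a/2 = k^2-3k+2v$, $B = -v((2v-1)k^2-(2v+1)k+2)$, and $C = k(k-1)v^3(v-1)$, so that Theorem~\ref{main_bound_thm} reads
\[
AN^2+BN+C+k(k-3)\,i(v-i)\geq 0.
\]

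The first task is to verify two polynomial identities in $k$ and $v$: $-B = v(a+b)$, and $B^2-4AC = v^2 D$. The first is a short direct expansion of $b$ and $B$. The second is the main algebraic step: after expanding $B^2/v^2$ and $4AC/v^2$ as polynomials in $k$ with coefficients in $v$, the difference $B^2/v^2 - D$ factors as $4v(v-1)\,k\,[k^3 - 4k^2 + (2v+3)k - 2v] = 4v(v-1)\,k(k-1)\,A$, matching $4AC/v^2$. With these identities the larger root of $A(N')^2+BN'+C = 0$ evaluates to
\[
\frac{-B+v\sqrt{D}}{2A} \;=\; v + \frac{v(b+\sqrt{D})}{a}.
\]

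Next I construct the extension. From a $\UCA(N;k,v)$ with $i = N \bmod v$, append $v-i$ new rows as follows: in each column $c$, list in arbitrary order the $v-i$ symbols currently occurring $\lfloor N/v\rfloor$ times in that column. Coverage is inherited from the original, and every symbol now occurs $\lfloor N/v\rfloor+1$ times in every column, so the enlarged array is a $\UCA(N+v-i;k,v)$ with residue $i'=0$. Theorem~\ref{main_bound_thm} applied to this extension yields $A(N+v-i)^2+B(N+v-i)+C\geq 0$. Since any covering array satisfies $N\geq v^2$, the extended row count exceeds $v^2$; meanwhile the smaller root $N_- = v+v(b-\sqrt{D})/a$ is at most $v^2$, as follows from the calculation $b - a(v-1) = -k^2+(4v-1)k-(4v^2-2)$, whose real maximum equals $-2v+9/4\leq -7/4$ for $v\geq 2$. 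So $N+v-i$ lies above the small branch, forcing $N+v-i \geq v + v(b+\sqrt{D})/a$, whence
\[
N \;\geq\; \frac{v(b+\sqrt{D})}{a} + i \;\geq\; \frac{v(b+\sqrt{D})}{a}.
\]

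The main obstacle is the polynomial identity $B^2-4AC = v^2 D$: the expansion is elementary but lengthy, and the crucial step is spotting the factor $(k-1)A = (k-1)(k^2-3k+2v)$ in the difference, after which the remaining computation collapses. The extension step and the verification that $N_-\leq v^2$ are routine once set up.
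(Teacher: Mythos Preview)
Your proof is correct and follows essentially the same route as the paper: pad the $\UCA(N;k,v)$ to a uniform array whose row count is a multiple of $v$, apply Theorem~\ref{main_bound_thm} with $i=0$, and read off the larger root of the resulting quadratic. If anything, your write-up is more complete than the paper's own sketch: you explicitly verify the identities $-B=v(a+b)$ and $B^2-4AC=v^2D$, and you justify (via $b-a(v-1)<0$) that the extended row count lies above the smaller root, a step the paper asserts without argument.
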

\begin{proof}
  Consider a uniform covering array $\CA(N;k,v)$. Let $N'$ be the
  least multiple of $v$ that is at least $N$. This means that
  $N' \leq N+v-1$ and the uniform $\CA(N;k,v)$ can be extended
  to a $\CA(N';k,v)$ in which each column has each symbol occurring
  exactly $N'/k$ times.  Theorem~\ref{main_bound_thm} can be applied
  to the $\CA(N';k,v)$, to get that
\[
2vN'(N'-1) \geq k\left ( 2(N')(N'-v) - (k-1)(N'-v^2+v)(N'-v^2)  \right ).
\]
This reduces to 
\[
0 \leq (k^2-3k+2v)(N')^2 - v(k-1)(2kv-k-2)N' - k(k-1)(v^4-v^3) \leq 0
\]
Then $N'$ must be bounded below by the quadratic's larger root.
The result follows since  $N' -(v-1)\leq N$.
\end{proof}

%I don' think that I am getting the same number under the square root!!!

By taking the derivative of Inequality~(\ref{simple_ineq}) with respect
to $k$ and approximating its roots we compute that this bound reaches
it maximum at a value of $k$ less than, but close to
\[
k_{\max} \approx \frac{16v^2-20v-15}{8v-16} = 2v + \frac{3}{2} + \frac{9}{8v-16}.
\]
The error in this approximation is less then $0.5$ after $v=16$. The value of the bound at this maximum point is approximately
\[
N_v = \UCAN(2,k_{\max},v) \geq 2.4142v - 1.17678 - \frac{3.5026v + 10.2260}{8v^2+8v-4}.
\]
That is, for $k > k_{\max}$ the value of the bound from
Corollary~\ref{simpler_cor} is smaller than $N_v$.  Since we know that
$\UCAN(2,k+1,v) \geq \UCAN(2,k,v)$, the bound from
Corollary~\ref{simpler_cor} loses its utility for any $k > k_{\max}$.
The maximum useful $k$ for the bound of Theorem~\ref{main_bound_thm}
must also be close to this $k_{\max}$. In our classification results
six uniform covering arrays meet the bound from
Theorem~\ref{main_bound_thm}.  Five have $k=v+2$ and one,
$\UCA(21;7,4)$, has $k = v+3$.

\begin{comment}
\subsection{where does this bound stop being useful}
for $v > 2$ this bound at $k=4v$ between
\[
\frac{16*v^3 - 28*v^2 + 8*v + 1 +  (4*v^2 - (23/6)*v + 191/288)\sqrt{6v}}{16*v - 10}
\]
and 
\[
frac{16*v^3 - 28*v^2 + 8*v + 1 +  (4*v^2 - (23/6)*v + 479/288)\sqrt{6v}}{16*v - 10}
\]

for $v > 2$ this bound at $k=2v$ this bound is between 
\[
\frac{8*v^3 - 16*v^2 + 6*v + 2 + (4v^2-6v+3/2) \sqrt{2v}}{8*v - 8}
\]
and 
\[
\frac{8*v^3 - 16*v^2 + 6*v + 2 + (4v^2-6v+5/2) \sqrt{2v}}{8*v - 8}
\]

The higher of the former subtracted from the lower of the latter is positive for $v>2$ so we know that the bound is no good after $k \geq 4v$

for $v > 2$ this bound at $k=3v$ between
\[
\frac{18*v^3 - 33*v^2 + 11*v + 2 + (12v^2 - \frac{109}{8}v + \frac{3863}{1536}) \sqrt{v}}{18*v - 14}
\]
and 
\[
\frac{18*v^3 - 33*v^2 + 11*v + 2 + (12v^2 - \frac{109}{8}v + \frac{5399}{1536}) \sqrt{v}}{18*v - 14}.
\]

The final thing we would like to know is the maximum value of $k$ for which this bound does better than monotonicity of covering arrays, that is when does this bound decrease as $k$ increases.  For $v \geq 4$, we have
\[
? < \sqrt{ k^4 +(8v^2-16v+2)k^3 + (-8v^3+24v-3)k^2 +(8v^3 -8v^2 -8v -4)k  + 4} < (k^2 + (4v^2 - 8v + 1)k + 28*v^3 - 36*v^2 + 20*v - 2)
\]
\end{comment}

\subsection{Constraints on covering arrays with $v+2$ columns}

The strongest structural conditions implied by equality in
Theorem~\ref{main_bound_thm} happen when $k=v+2$ and $N = v^2 + v
-1$. We further investigate covering arrays with these parameters.
%KAREN is this definition clear?
First we introduce some notation. In a uniform covering array
$\UCA(v^2+v-1; k,v)$, in each column every symbol occurs either $v$
times or $v+1$ times. An entry in a $\UCA(N;k,v)$ is called a
\textsl{high frequency entry} if the symbol in the entry occurs at
least $v+1$ times in the entry's column.

\begin{theorem} 
  Let $C$ be a $\UCA(v^2+v-1; v+2,v)$ and let $a_i$ be the number of
  rows that contain exactly $i$ high frequency entries. Then
\begin{align}
\sum_{i=0}^{v+2} a_i &= v^2 + v -1, \label{ai1}\\
\sum_{i=0}^{v+2} ia_i &= (v+2)(v-1)(v+1) , \label{ai2}\\  % (v+2)(v-1)(v+1) = (v+2)(v^2-1) = v^3 + 2v^2-v -2
\sum_{i=0}^{v+2} i^2a_i &=  (v+2)(v+1)^2(v-1). \label{ai3} %  (v+2)(v+1)((v-1)*(v-1) + (v-1)) + (v+2)(v-1)(v+1) = (v+2)*(v+1)*( (v-1) + v*(v-1)) = (v+2)*(v+1)*(v+1)*(v-1) = (v^2+3v+2)*(v^2-1) = v^4 + 3v^3 + v^2 - 3v -2.
\end{align}
Further,  $a_0 \leq 1$, and $a_1=a_2= 0$.
\end{theorem}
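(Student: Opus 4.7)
The plan is to exploit the full structural consequences of equality in Theorem~\ref{main_bound_thm}. With $N=v^2+v-1$ one has $d=v$ and $i=v-1$, so every column contains a unique low-frequency (LF) symbol occurring $v$ times and $v-1$ high-frequency (HF) symbols each occurring $v+1$ times. Inspecting the proof of Theorem~\ref{main_bound_thm}, equality in the $M_2$ bound \eqref{eq:m2} forces, for every pair of columns $c \neq c'$, a permutation $\pi_{cc'}$ with $\lambda_{x,\pi(x)}^{cc'}=m_x^c+1-v$ and $\lambda_{x,y}^{cc'}=1$ whenever $y\neq\pi(x)$. Matching multiplicities shows $\pi_{cc'}$ sends the LF symbol of $c$ to that of $c'$, so $\lambda_{x,\pi(x)}^{cc'}=2$ for HF $x$ and $\lambda=1$ in every off-permutation cell; this HF/LF-preserving permutation structure is the main tool.

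The three moment identities follow by double counting. Equation~\eqref{ai1} is the total row count, and \eqref{ai2} counts HF entries in two ways, yielding $(v+2)(v-1)(v+1)$. For \eqref{ai3}, letting $H_{r,c}\in\{0,1\}$ indicate HF entries, I would write $\sum_i i^2 a_i=\sum_r i_r^2=\sum_{c,c'}N_{c,c'}$ with $N_{c,c'}=|\{r:H_{r,c}=H_{r,c'}=1\}|$; the diagonal contributes $(v+2)(v^2-1)$, and for $c\neq c'$ the structure above gives $N_{c,c'}=(v-1)v$, for a total of $(v+2)(v+1)^2(v-1)$. The inequality $a_0\leq 1$ is immediate because two all-LF rows would be identical and so agree in all $v+2$ columns, violating the agreement-at-most-two condition forced by equality.

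For $a_1=a_2=0$, I would assume for contradiction that some row $r$ has exactly $j\in\{1,2\}$ HF entries; for $r'\neq r$, let $s(r')$ and $t(r')$ count agreements of $r'$ with $r$ at LF and HF columns of $r$, so $1\leq s(r')+t(r')\leq 2$. Two uniqueness facts, both consequences of $\lambda=1$ in off-permutation cells, drive the argument: first, the LF-LF pattern of $r$ in any two of its LF columns has $\lambda=1$, so $r$ is the unique row with that pattern and $s(r')\leq 1$; second, the HF-LF pattern of $r$ in any HF/LF column pair also has $\lambda=1$, so $t(r')\geq 1$ forces $s(r')=0$. Combining these with the totals $\sum_{r'}t(r')=jv$ and $\sum_{r'}s(r')=(v+2-j)(v-1)$, together with the bound $T_2\leq 1$ on the number of other rows agreeing with $r$ at all its HF columns (since $\lambda\leq 2$ there), produces two expressions for $|\{r':t(r')=0\}|$ that disagree: $v^2-2$ vs.\ $v^2-1$ when $j=1$, and forcing $T_2=2>1$ when $j=2$. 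The main obstacle is carefully extracting the HF/LF-preserving permutation structure from the equality conditions of Theorem~\ref{main_bound_thm}, since the statement advertises only the agreement-at-most-two conclusion; once the $\lambda$-structure is in hand, the uniqueness lemmas and the arithmetic close cleanly.
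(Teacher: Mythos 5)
Your proof is correct, and for the moment identities \eqref{ai1}--\eqref{ai3} and the bound $a_0\leq 1$ it runs essentially parallel to the paper's: the same double counts over (pairs of) high-frequency entries, with your computation $N_{c,c'}=v(v-1)$ for $c\neq c'$ being exactly the implicit step in the paper's count of the set $\{(x,c,y,c',r)\}$. (One small remark: the $\lambda$-structure you extract --- every cell involving a low-frequency symbol has $\lambda=1$, and each high-frequency symbol sits in exactly one doubled cell, matched to a high-frequency symbol of the other column --- actually follows from uniformity and the covering property alone via the row and column sums of the $\lambda$-matrix, and is also stated explicitly in Corollary~\ref{v+2_cor}; so the ``main obstacle'' you flag is less of one than you fear.) Where you genuinely diverge is in proving $a_1=a_2=0$. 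The paper fixes a row $r$ with $i$ high-frequency entries, computes $\sum_{b\neq r}\mu_{r,b}=v^2+v-2+i$ and bounds $\sum_{b\neq r}\binom{\mu_{r,b}}{2}\leq\binom{i}{2}$, then applies a variance (second-moment) inequality to conclude $i=0$ or $i\geq 3$; notably this does not use the ``agree in at most two columns'' condition at all. You instead invoke the full equality structure ($1\leq s+t\leq 2$ for every other row, plus the uniqueness of LF--LF and HF--LF patterns) to get an exact integer contradiction: $v^2-1$ versus $v^2-2$ rows with $t=0$ when $j=1$, and two rows forced onto a cell of multiplicity at most two when $j=2$. Both arguments are valid; the paper's buys a uniform statement ($i\in\{0\}\cup[3,\infty)$) from weaker hypotheses via an averaging trick, while yours is more elementary and transparent but is tailored to $j=1,2$ and leans on the equality conditions of Theorem~\ref{main_bound_thm} more heavily.
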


\begin{proof}
  Let $C$ be a $\UCA(v^2+v-1; v+2,v)$. For a column, $c$, denote by
  $S_c$ the set of symbols in high frequency entries.  We know
  $|S_c| = v-1$.

Equation~(\ref{ai1}) is established by simply counting the rows of
  $C$. Equation~(\ref{ai2}) is established by computing the
  cardinality of the set
\[
  \{(x,c,r)\; |\; 0 \leq c < v+2, \; C_{r,c}=x \in S_c \}.
\]
Equation~(\ref{ai3}) is established by computing the cardinality of the set
\[
  \{(x,c,y,c',r)\; |\; 0 \leq c\neq c' < v+2, \; C_{r,c}=x \in S_c, C_{r,c'} = y \in S_{c'}\}.
\]

There is exactly one symbol per column that is repeated exactly
$v$ times. So if two rows had no high frequency entries, then both
rows would only contain the symbols that occur exactly $v$ times. This
would mean that a pair of such symbols is repeated and $C$ could not
be a covering array. Thus $a_0 \leq 1$. 

To establish that $a_1 = a_2 =
0$, let $r$ be a fixed row containing $i$ symbols which appear $v+1$
times in their column. For any of the other $v^2+v-2$ rows $b$, let
$\mu_{r,b}$ be the number of columns where rows $r$ and $b$ agree.
Counting the flags $(c,b)$ with $0 \leq c < v+2$, $b \neq r$ and
$C_{b,c} = C_{r,c}$ we have 
\begin{align}
\sum_{b \neq r} \mu_{r,b} & = v^2 + v -2 + i,  \label{ai4} \\ %i*(v+1-1) + (v+2-i) (v-1) = i + (v+2)(v-1) 
\overline{\mu} &= \frac{v^2 +v -2 +i}{v^2 + v -2} .
\end{align}
Counting the flags $(c,c',b)$ such that $0 \leq c \neq c' < v+2$, $C_{b,c} = C_{r,c}$, and $C_{b,c'} = C_{r,c'}$ we have
\begin{equation}
\sum_{b \neq r} \binom{\mu_{r,b}}{2} \leq \binom{i}{2},
\end{equation}
(this follows since only high frequency entries can occur twice).
Using Equation~(\ref{ai4}), this implies that
\begin{equation}
\sum_{b \neq r} \mu_{r,b}^2 \leq v^2 + v -2 + i^2 . \label{ai5} \\ 
\end{equation}
Now we get
\begin{align*}
  0 & \leq \sum_{b \neq r} (\mu_{r,b} - \overline{\mu})^2 \\
  &= -\overline{\mu}^2(v^2+v-2) + \sum_{b \neq r} \mu_{r,b}^2 \\
    &\leq i\frac{i(v^2+v-3)-(2v^2+2v-4)}{v^2+v-2}.
\end{align*}
Which implies that $i=0$ or $i \geq \lceil 2 + 2/(v^2+v-3) \rceil = 3$.
\end{proof}

\begin{corollary} 
Let $C$ be a $\UCA(v^2+v-1; v+2,v)$ and let $a_i$ be the number of rows
that have exactly $i$ high frequency entries.
If $a_0 = 1$, then $a_{v+1} = v^2 + v - 2$ and $a_i = 0$ for all other $i$.
\end{corollary}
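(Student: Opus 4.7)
The plan is to extract from the three moment identities (\ref{ai1}), (\ref{ai2}), (\ref{ai3}) a single ``variance-like'' identity that controls the entire distribution of the $a_i$. Specifically, I would compute the centered second moment
\[
\sum_{i=0}^{v+2} (i-(v+1))^2 a_i = \sum_{i=0}^{v+2} i^2 a_i - 2(v+1)\sum_{i=0}^{v+2} i a_i + (v+1)^2 \sum_{i=0}^{v+2} a_i.
\]
Substituting the three right-hand sides from the theorem, the factor $(v+1)^2$ comes out of all three pieces, and the remaining bracket reduces to $(v^2+v-1) - (v+2)(v-1) = 1$. Thus
\[
\sum_{i=0}^{v+2}(i-(v+1))^2 a_i = (v+1)^2.
\]

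With that identity in hand, the corollary is essentially immediate. Assume $a_0 = 1$. The $i=0$ term alone contributes $(v+1)^2 \cdot 1 = (v+1)^2$ to the sum above, so since every summand is non-negative, every term with $i \geq 1$ must vanish. Because $(i-(v+1))^2 = 0$ only at $i=v+1$, this forces $a_i = 0$ for all $i \in \{1,2,\ldots,v\} \cup \{v+2\}$.

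Finally, substituting $a_0 = 1$ and $a_i = 0$ for $i \notin \{0,v+1\}$ into equation (\ref{ai1}) yields $a_{v+1} = v^2 + v - 2$, as required. The only nontrivial step is the algebraic collapse of the centered second moment; everything else is a non-negativity argument followed by reading off $a_{v+1}$ from (\ref{ai1}). Note that one does not even need the theorem's separate conclusions that $a_1=a_2=0$, since the second-moment identity already eliminates these indices under the hypothesis $a_0=1$.
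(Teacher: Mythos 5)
Your proof is correct and takes essentially the same approach as the paper: both are second-moment computations from the three counting identities, with yours centering at $v+1$ over all $i$ (so the $i=0$ term alone exhausts the total $(v+1)^2$) while the paper equivalently shows that the variance of the distribution restricted to $i \neq 0$ is zero. The algebra checks out, so nothing further is needed.
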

\begin{proof}
If $a_0 = 1$ then \eqref{ai1},~\eqref{ai2} and~\eqref{ai3} imply
that the average $i$ is $\overline{i} = v+1$. The variance in the
distribution of $i \neq 0$ is equal to 0, since
\begin{align*}
\sum_{i=1}^{v+2} (i-\overline{i})^2 &= -\overline{i}^2(v+2)(v-1) + \sum_{i=1}^{v+2} i^2a_i \\
&= (v+2)(v-1)(v+1)^2 - (v+2)(v-1)(v+1)^2.
\end{align*}
\end{proof}

All this leaves open the possible existence of a $\CA(N;v+2,v)$ with
$N < v^2 + v -1$ (and thus smaller than those described in
Corollary~\ref{v+3_cor}) if only the covering array is {\em not} uniform.  However
some constraints exist even in this case for $\CA(N;v+2,v)$. In
\cite{ste}, the following result is proved using a similar counting
method.

\begin{theorem} 
  Assume that there exists a $\CA(N;k,v)$ that has a row that contains
  at most two high frequency entries.  If $k=v+2$, then
  $N \geq v^2 + v -1$; and if $k \geq v+3$, then $N \geq v^2+v$.

  Assume that there exists a $\CA(N;k,v)$ that has a row that contains
  at most three high frequency entries. If $k \geq v+2$, then
  $N \geq v^2+v-1$.
\end{theorem}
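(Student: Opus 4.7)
The plan is to imitate the counting strategy of Theorem~\ref{main_bound_thm} but applied to a single distinguished row $r$, and to finish off with a Bonferroni inequality in place of the Cauchy--Schwarz/variance step. Fix a row $r$ of a $\CA(N;k,v)$ with $i$ high frequency entries, let $S$ denote the set of columns in which $r$ is high frequency, write $h_c=m_{C_{r,c}}^c-v\geq 1$ for each $c\in S$ and $H=\sum_{c\in S}h_c$, and let $\mu_{r,b}$ count the columns on which rows $r$ and $b$ agree. A preliminary fact I would record is that in any $\CA(N;k,v)$ every symbol must appear at least $v$ times in every column: if $m_x^c<v$, then pairing column $c$ with any other column leaves some pair $(x,y)$ uncovered. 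In particular, if $C_{r,c}$ is low frequency then $m_{C_{r,c}}^c=v$, so combining the covering property $\lambda\geq 1$ with $\lambda_{x,y}^{c,c'}\leq m_x^c-(v-1)$ forces $\lambda_{C_{r,c},C_{r,c'}}^{c,c'}=1$ for every $c'$.

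Using these observations one computes
\[
\sum_{b\neq r}\mu_{r,b}=\sum_c(m_{C_{r,c}}^c-1)=k(v-1)+H
\]
and, since only column pairs $\{c,c'\}\subseteq S$ can contribute,
\[
\sum_{b\neq r}\binom{\mu_{r,b}}{2}=\sum_{c<c'}\bigl(\lambda_{C_{r,c},C_{r,c'}}^{c,c'}-1\bigr)\leq\sum_{\{c,c'\}\subseteq S}\min(h_c,h_{c'}),
\]
the last step using the symmetric estimate $\lambda_{x,y}^{c,c'}\leq\min(m_x^c,m_y^{c'})-(v-1)$. The Bonferroni inequality applied to the events ``row $b\neq r$ agrees with $r$ in column $c$'' then yields
\[
N-1\geq \sum_{b\neq r}\mu_{r,b}-\sum_{b\neq r}\binom{\mu_{r,b}}{2}\geq k(v-1)+H-\sum_{\{c,c'\}\subseteq S}\min(h_c,h_{c'}).
\]

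A short case analysis on $i$ completes the argument. For $i=0$ both $H$ and the sum of minima vanish, giving $N\geq k(v-1)+1$. For $i=1,2$ the quantity $H-\sum\min(h_c,h_{c'})$ equals $\max_{c\in S}h_c\geq 1$, so $N\geq k(v-1)+2$. For $i=3$, ordering $h_1\leq h_2\leq h_3$, the sum of minima equals $2h_1+h_2$ and hence $H-\sum\min=h_3-h_1\geq 0$, giving $N\geq k(v-1)+1$ in the worst case $h_1=h_2=h_3$. Substituting $k=v+2$ and $k\geq v+3$ into these formulas produces the four claimed inequalities, the binding case in the ``at most two'' statement being $i=0$ and in the ``at most three'' statement being $i=3$ with all $h_c$ equal. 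The main obstacle is precisely the choice of inequality at the end: a Cauchy--Schwarz/variance argument of the type used in Theorem~\ref{main_bound_thm} is too lossy for small $i$ because it penalises configurations where many rows each share more than one column with $r$, and in particular fails to deliver $N\geq v^2+v-1$ at $k=v+2$ and $i=3$; Bonferroni succeeds because it directly bounds an integer union and is tight when each $\mu_{r,b}$ is at most~$2$.
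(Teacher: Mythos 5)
Your argument is correct, and all four numerical conclusions check out. A point of context: the paper does not prove this theorem itself --- it is quoted from Stevens's thesis \cite{ste} with only the remark that it follows from ``a similar counting method'' --- so there is no in-paper proof to compare against line by line. What you have written is a valid, self-contained proof squarely in the spirit of that method: you localize the Bonferroni estimate of Theorem~\ref{main_bound_thm} (there applied globally as $M_1-M_2\leq\binom{N}{2}$) to the pairs involving one fixed row $r$, exactly as the paper's analysis of the quantities $\mu_{r,b}$ does when it proves $a_1=a_2=0$. The two ingredients that make your version close are sound: (i) in any strength-$2$ array every symbol occurs at least $v$ times in every column, so a low-frequency entry of $r$ forces $\lambda_{C_{r,c},y}^{c,c'}=1$ and contributes nothing to $\sum_{b\neq r}\binom{\mu_{r,b}}{2}$; and (ii) the two-sided bound $\lambda_{x,y}^{c,c'}\leq\min(m_x^c,m_y^{c'})-(v-1)$, which gives $H-\sum_{\{c,c'\}\subseteq S}\min(h_c,h_{c'})\geq h_{\max}-h_{\min}\geq 0$ when $i=3$ and $\geq 1$ when $i\in\{1,2\}$; the simpler one-sided bound $\lambda\leq m_x^c-(v-1)$ would not suffice for unequal $h_c$. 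Indeed your estimate is slightly stronger than the statement for $i\leq 2$ and $k\geq v+3$, where it yields $N\geq k(v-1)+1\geq v^2+2v-2$. One small mischaracterization in your closing remark: Theorem~\ref{main_bound_thm} already uses the Bonferroni truncation rather than Cauchy--Schwarz; it is the subsequent theorem on the row statistics $a_i$ that uses the variance argument you are contrasting with.
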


\section{Classifying covering arrays}\label{procSec}

In all of our computer-aided studies of covering
arrays, we fix the parameters of the array: the order, $v$; the degree, $k$;
and the size, $N$. We further consider covering arrays as multisets of their rows.
(Given two multisets, $S$ and $T$, the 
\emph{multiset sum} $S \uplus T$ is the set for which the multiplicity of each 
element is the sum of its multiplicities in $S$ and $T$.)
Two covering arrays are then said to be \emph{equivalent} if one can be
obtained from the other by a permutation of the columns and
by column-wise permutations of the elements of $Z_v$.
A transformation that maps a covering array $C$ onto
itself is an \emph{automorphism}, and the set of all 
automorphisms form the \emph{(full) automorphism
group} of $C$, denoted by $\Aut{C}$.
Our computer search builds all inequivalent covering arrays with a
given parameter set.

For isomorph rejection, we represent  
covering arrays as coloured graphs (this is described below) and, using
\emph{nauty}~\cite{mck_pip}, we may also determine the automorphism group
for graphs and thereby the corresponding arrays.
The coloured graph $G$ corresponding to a covering array $C$ is
constructed in the following standard way~\cite{OBK99}.

The vertices of $G$ will be coloured with two colours. Colourings of graphs are
the mechanism that \emph{nauty}~\cite{mck_pip} uses to forbid certain mappings between
vertices; they need not be proper colourings.
First, $G$ contains $k$ disjoint copies of the
complete graph of order $v$; all of these vertices are coloured with
the first colour. These vertices represent the entries in the columns of the $\CA$;
the $i$th copy of the complete graph corresponds to the $i$th column
and the $j$th vertex in each copy corresponds to the symbol $j$ in
that column.  Further, $G$ contains $N$ vertices, representing the rows of $C$,
all coloured with the second colour. Each such vertex
is connected to the $k$ vertices corresponding to the column-symbol
pairs that occur in that row. Note that the obvious homomorphism
$\Aut{G} \rightarrow \Aut{C}$ has a nontrivial kernel if there are
duplicate rows.

Our main method, presented in Section~\ref{sec:comp_main}, constructs
one representative from each equivalence class of covering arrays, $\CA(N;k,v)$. 
This is done by starting with a set of
representatives of the equivalence classes of 
 covering arrays $\CA(N;2,v)$, and sequentially adding columns, rejecting
equivalent covering arrays after every step. Canonical augmentation
\cite[Sect.\ 4.2.3]{kas}, \cite{mck2} is used when extending
representatives of covering arrays $\CA(N;k',v)$ to representatives of
covering arrays $\CA(N;k'+1,v)$; this part is described in detail in
Section~\ref{sec:comp_isomorph}.  Further, since our goal is to
classify all $\CA(N;k,v)$ for certain $k$ but not necessarily those
that have a smaller number of columns, we can occasionally speed up
the search by rejecting some partial arrays that cannot be extended to
a full $k$-column covering array; the method is described in
Section~\ref{sec:comp_force}.  When studying only uniform covering
arrays, it is easy to modify the algorithm to require uniformity.

\subsection{Algorithm} \label{sec:comp_main}

In this section, we describe the algorithm for classifying all
covering arrays $\CA(N; k+1, v)$, starting from a set of
equivalence class representatives of covering arrays $\CA(N; k, v)$.  
To apply such an algorithm, we need a base case,
which here is a classification of the covering
arrays $\CA(N;2, v)$. Since all $v^2$ pairs of symbols must occur in the two columns
of those covering arrays, we may focus on the $N-v^2$ excess rows and
just the equivalence issue. For the excess part in the first column,
the symbol distributions are in one-to-one correspondence to the
integer partitions of $N-v^2$ into at most $v$ parts.  In the second
column we may take obvious symmetries into account to reduce the
number of candidates considered. Finally, equivalent arrays are
rejected.

\subsubsection{Extending covering arrays}

Consider a covering array $C'$ obtained by adding a column to a covering array $C$. 
The symbols in the new column in $C'$ induce a partition of the rows of $C$ into covering
arrays of strength $1$. We call a subset of $C$ that is a covering array of strength $1$ 
a \emph{cover} of $C$. If no proper subset of a cover of $C$ is a cover of $C$, we call 
it a \emph{minimal cover} of $C$. Each cover of $C$ has one or more subsets that are 
minimal covers. For a cover $D$, we denote the lexicographically smallest subset that 
is a minimal cover by $\phi(D)$.

When extending covering arrays, we first determine 
$\mathcal{D}$, the set of all minimal covers of $C$. Then we find all sets 
$\{D_1,D_2,\dots,D_v\}$ of $v$ minimal covers that pack inside $C$, that is, 
$\biguplus_i D_i \subseteq C$. For each such set, we generate all full partitions 
$\{C_1,C_2,\dots,C_v\}$ of $C$, where $D_i \subseteq C_i$ for all $i$, by adding the 
remaining rows in the sets $D_i$ in all possible ways. To avoid repetition, we reject 
in the search all partitions for which $D_i \neq \phi(C_i)$. 
To get a covering array from an unlabeled partition, we map the symbols to the parts 
such that the resulting covering array is lexicographically smallest; this 
mapping from partitions to covering arrays is required in the sequel.

\subsubsection{Isomorph rejection} \label{sec:comp_isomorph}

Having generated all extensions of $C$ up to permutation of symbols in
the last column, canonical augmentation is used for isomorph rejection
in two phases.  The first phase rejects some arrays and ensures that
two remaining arrays can be equivalent only if they were generated
from the same $C$, and further that there is an automorphism of $C$
that maps one onto another. The second phase then accepts precisely
one array from each equivalence class. Actually the two phases can be
carried out in arbitrary order, and in our implementation Condition~\ref{enum:chi} below is checked first to help in validating
the results, to be discussed later.

In the first phase, we use the $v$-tuple consisting of the counts of
each symbol in that column sorted in descending order as an
invariant of a column. For example, if a column contains three
entries equal to $0$, six entries equal to $1$, and three entries
equal to $2$, then the invariant is $(6,3,3)$.  A covering array $C'$
passes the first phase of canonical augmentation if:
\begin{enumerate}
\item no other column has lexicographically smaller invariant than the last column, and \label{enum:invariant}
\item out of those columns with the same invariant as the last column, the last column is in the orbit 
that gets the smallest label in a canonical labeling by \emph{nauty}.
\end{enumerate}
Let $\mu$ be the largest multiplicity of a symbol in the column of $C$ that has the 
smallest invariant. The first condition ensures that in a canonically augmented 
$C'$, there is no symbol in the new column with multiplicity larger than $\mu$. 
This allows us to remove from $\mathcal{D}$ all minimal covers with size larger 
than $\mu$ before the search begins and also not consider full partitions of $C$ 
for which one part has size larger than $\mu$.

For the second phase, we treat the array $C'$ as a partition of $C$. Let $c$ be an arbitrarily chosen row of $C$ which has multiplicity 1 and is held fixed for the search of extensions of $C$. Let $\mathcal{C}$ be the orbit of $C'$ under the action of $\Aut{C}$, and let $\chi(C',c) = \phi(A)$ where $A$ is the part in $C'$ that contains $c$. 
An array $C'$ passes the second phase if 
\begin{enumerate}
\setcounter{enumi}{2}
\item $\chi(C',c) \leq \chi(C'',c)$ for all $C'' \in \mathcal{C}$, and \label{enum:chi}
\item $C'$ is the smallest in the set $\{C'' \in \mathcal{C} : \chi(C'',c) = \chi(C',c) \}$, in terms of lexicographical ordering of the corresponding arrays.
\end{enumerate}
Condition~\ref{enum:chi} allows us to reject from $\mathcal{D}$ all minimal covers 
$D$ that contain $c$ for which there is a $g \in \Aut{C}$ such that $c \in gD$ 
and $gD < D$. To this end, the row $c$ is selected to be the one that maximizes the
number of minimal covers that are rejected from $\mathcal{D}$.

\subsubsection{A pruning condition}\label{sec:comp_force}

Let $N$ and $v$ be integers with $N<v(v+1)$. Let $C$ be a $\CA(N;k,v)$ and let $C'$ be a $\CA(N; k', v)$ that is obtained by adding $\delta = k'-k$ columns to $C$.
Since $N$ is strictly less than $v^2+v$, each of the last $\delta$ columns of $C'$ contain at least one 
symbol of multiplicity $v$, each of which corresponds to a cover of size $v$ in $C$. 
For each pair of columns and each symbol of multiplicity $v$, the two covers 
intersect in exactly one row (considering duplicate rows as separate elements).  
Thus $C$ has a set of $\delta$ covers of size $v$ which pairwise intersect 
in only one row. 

If we are interested only in covering arrays $\CA(N;k',v)$ and not 
in covering arrays $\CA(N;k'',v)$ for any
$k < k'' < k'$, we can restrict our search to the covering arrays
$\CA(N;k,v)$ that satisfy this property. We gain further speedup by
running the search for each possible way to fix the set of $\delta$
covers of size $v$ that intersect in the desired way, as fixing the
set allows rejecting many covers in $\mathcal{D}$ immediately.

\subsubsection{Some implementation details}

A core subroutine of the algorithm is that of 
finding subsets of $\mathcal{D}$ that pack inside $C$. This was
implemented in two different ways, one using \emph{Cliquer} \cite{NO03} and 
one using \emph{libexact} \cite{kas2}. The simpler approach using \emph{Cliquer} 
is faster in some cases, but in most cases, the approach using \emph{libexact} is faster. 

To use \emph{Cliquer} we define $G$ to be a graph with a vertex for
every cover in $\mathcal{D}$ and an edge between two covers if their
multiset sum is a subset of $C$.  A packing corresponds to a clique of
size $v$ in $G$, but a clique may not be a valid packing if there are
not enough duplicate rows in $C$. Further, when $N\geq v(v+1)$,
two covers in a packing may be identical, so elements in $\mathcal{D}$
for which all rows have multiplicity greater than $1$ in $C$ must be
represented with duplicated vertices in $G$.

The library \emph{libexact} is used to find all solutions to a system
of linear equations $Ax=b$ with $0 \leq x_j \leq u_j$ where $A$ is a
$(0,1)$-matrix. We set up the instance as follows. For each cover in
$\mathcal{D}$, we have a variable whose value is the multiplicity of the
cover in the packing. For each different row in $C$, we then have an
inequality; namely, the row should occur in the packing at most as many
times as it occurs in $C$. To encode this as an equality, we add a
variable for each row that whose value is the slack in that inequality (that
is, how many instances of the row in $C$ are not covered by the
packing). Further, to force a solution to have exactly $v$ covers, we
add a condition that the sum of variables corresponding to covers in
$\mathcal{D}$ must be equal to $v$. The upper bounds of each variable are directly
obtained from the equalities, as all variables are nonnegative.

We introduce further slack variables to account for conditions on the sizes of covers in the packing. 
These slack variables have no effect on the solutions but they speed up the search by identifying 
some branches that cannot lead to a solution. For a valid packing, let $M_v$ 
be the number of covers of size $v$, let $M_{v+1}$ be the number of covers of size $v+1$, 
and let $M_{\geq v+2}$ be the number of covers of size at least $v+2$. We have
\begin{gather}
M_v  \leq v, \label{eq:ineq_slack1} \\
v M_v + (v+1) M_{v+1} + (v+2)M_{\geq v+2}  \leq N. \label{eq:ineq_slack2}
\end{gather}
Here \eqref{eq:ineq_slack2} is obtained by counting the rows in each
cover. We define $s_1$ and $s_2$ to be the slack variables in
\eqref{eq:ineq_slack1} and \eqref{eq:ineq_slack2}, respectively,
giving
\begin{gather}
  s_1 + M_v  = v, \label{eq:value_s1} \\
  s_1 + s_2 + M_{\geq v+2} = N - v^2, \label{eq:value_s2}
\end{gather}
where we used $M_v + M_{v+1} + M_{\geq v+2} = v$ to get
\eqref{eq:value_s2}. These equations can be directly implemented by
writing $M_v$ and $M_{\geq v+2}$ as sums of the variables
corresponding to covers of size $v$ or at least $v+2$,
respectively.
The upper bound of $s_1$ and $s_2$ in the \emph{libexact} instance is set to $N-v^2$, which follows from \eqref{eq:value_s2}.

\subsection{Computational results}\label{comp_res}

A classification is here carried out for $v=3,4,5,6$ and values of $N$ up
to the computational limit. The listed times refer to a single logical core
of an Intel Xeon E5 family processor with multi-threading enabled. 
Specifically, $\CA(N;k,v)$ are classified
for $10 \leq N \leq 14$ when $v=3$, for $17 \leq N \leq 20$ when
$v=4$, for $26 \leq N \leq 29$ when $v=5$, and for $37 \leq N \leq 40$
when $v=6$.  The full classification is performed for all possible
values of $k$, except for the cases of $\CA(29; k,5)$ and
$\CA(40; k,6)$, where some values of $k$ were skipped using the method
described in Section~\ref{sec:comp_force} to get to the cases of
$\CA(29; 7,5)$ and $\CA(40; 6,6)$; the latter has 0 solutions so
a $\CA(40; k,6)$ exists exactly when $k \leq 5$. 

Due to the computational time, we are unable to carry out a complete
classification in the following cases: $\CA(21; k, 4)$ with
$k \in \{3,\ldots,8\}$; $\CA(30; k,5)$ with $k \in \{3,\ldots,8\}$; and
$\CA(41; k,6)$ with $k \in \{3,\ldots,7\}$. For example, we predict that classifying
$\CA(21,7,4)$ would take 130 core-years.

In all these cases, the number of uniform arrays is also
obtained. Finally, in the uniform cases, the classification of
$\UCA(21; k,4)$ and $\UCA(30; k,5)$ is performed exhaustively and for
$\UCA(41; k,6)$ partially, skipping levels to get to $\UCA(41; 7,6)$;
the latter has 0 solutions so a $\UCA(41;k,6)$ exists exactly when
$k \leq 6$.

A complete table of results obtained for $\CA(N;k,v)$ and
$\UCA(N;k,v)$ is given in Table~\ref{tab:comp_res_detailed}. When
$\delta$ is not given, all covering arrays and uniform covering arrays
are classified. When $\delta$ is given, the stated quantities are the
numbers of covering arrays or uniform covering arrays obtained using
the method in Section~\ref{sec:comp_force} with the given
$\delta$. These quantities are lower bounds for the numbers of all
covering arrays and uniform covering arrays. In cases where the count
of all covering arrays is not given, only uniform covering arrays are
classified. \emph{Cliquer} is used in the cases marked with \textdagger, 
and \emph{libexact} is used in all other cases.

Because a covering array
occurs as a subset of a covering array with more rows, the
classification results of smaller $N$ could be obtained from the
results of larger $N$; however, the running times are reported
separately to give an idea of how the running time of the algorithm
depends on $N$. The method to generate inequivalent $2$-column arrays
is not optimized and the time is not comparable to the other times so
the time for $k=2$ is not reported; in all cases the generation takes
less than $10$ seconds.

\begin{longtable}{rrrr|rrrl}
\caption{Detailed computational results} \\
\label{tab:comp_res_detailed}
$v$ & $N$ & $k$ & $\delta$ & \# CA & \# UCA & CPU time & \\
 \hline 
\endfirsthead
\caption[]{Detailed computational results (cont.)} \\
$v$ & $N$ & $k$ & $\delta$ & \# CA & \# UCA & CPU time & \\
 \hline 
\endhead

$3$ & $10$ & $2$ & $$ & $1$ & $1$ &  &   \\
$3$ & $10$ & $3$ & $$ & $3$ & $3$ & $< 0.01$ s &   \\
$3$ & $10$ & $4$ & $$ & $2$ & $2$ & $< 0.01$ s &   \\
$3$ & $10$ & $5$ & $$ & $0$ & $0$ & $< 0.01$ s &   \\
\hline
$3$ & $11$ & $2$ & $$ & $3$ & $1$ &  &   \\
$3$ & $11$ & $3$ & $$ & $20$ & $9$ & $0.01$ s &   \\
$3$ & $11$ & $4$ & $$ & $27$ & $8$ & $0.02$ s &   \\
$3$ & $11$ & $5$ & $$ & $3$ & $3$ & $0.01$ s &   \\
$3$ & $11$ & $6$ & $$ & $0$ & $0$ & $< 0.01$ s &   \\
\hline
$3$ & $12$ & $2$ & $$ & $7$ & $1$ &  &   \\
$3$ & $12$ & $3$ & $$ & $134$ & $9$ & $0.02$ s &   \\
$3$ & $12$ & $4$ & $$ & $987$ & $53$ & $0.16$ s &   \\
$3$ & $12$ & $5$ & $$ & $891$ & $125$ & $0.38$ s &   \\
$3$ & $12$ & $6$ & $$ & $13$ & $11$ & $0.10$ s &   \\
$3$ & $12$ & $7$ & $$ & $1$ & $1$ & $< 0.01$ s &   \\
$3$ & $12$ & $8$ & $$ & $0$ & $0$ & $< 0.01$ s &   \\
\hline
$3$ & $13$ & $2$ & $$ & $16$ & $3$ &  &   \\
$3$ & $13$ & $3$ & $$ & $937$ & $151$ & $0.09$ s &   \\
$3$ & $13$ & $4$ & $$ & $53\,523$ & $12\,747$ & $6.0$ s &   \\
$3$ & $13$ & $5$ & $$ & $739\,845$ & $302\,524$ & $144.5$ s &   \\
$3$ & $13$ & $6$ & $$ & $752\,165$ & $506\,680$ & $940.8$ s &   \\
$3$ & $13$ & $7$ & $$ & $24\,934$ & $22\,539$ & $600.9$ s &   \\
$3$ & $13$ & $8$ & $$ & $5$ & $5$ & $11.4$ s &   \\
$3$ & $13$ & $9$ & $$ & $4$ & $4$ & $< 0.01$ s &   \\
$3$ & $13$ & $10$ & $$ & $0$ & $0$ & $< 0.01$ s &   \\
\hline
$3$ & $14$ & $2$ & $$ & $32$ & $4$ &  &   \\
$3$ & $14$ & $3$ & $$ & $5\,973$ & $476$ & $0.51$ s &   \\
$3$ & $14$ & $4$ & $$ & $2\,212\,568$ & $214\,630$ & $580.2$ s &   \\
$3$ & $14$ & $5$ & $$ & $325\,046\,812$ & $43\,473\,308$ & $29.9$ h &   \\
$3$ & $14$ & $6$ & $$ & $7\,759\,008\,032$ & $1\,516\,020\,148$ & $54.1$ d &   \\
$3$ & $14$ & $7$ & $$ & $18\,844\,482\,204$ & $5\,827\,703\,442$ & $446.8$ d &   \\
$3$ & $14$ & $8$ & $$ & $2\,790\,300\,754$ & $1\,429\,724\,866$ & $519.7$ d &   \\
$3$ & $14$ & $9$ & $$ & $17\,068\,936$ & $12\,725\,845$ & $43.4$ d &   \\
$3$ & $14$ & $10$ & $$ & $4\,490$ & $4\,117$ & $4.1$ h &   \\
$3$ & $14$ & $11$ & $$ & $0$ & $0$ & $3.3$ s &   \\
\hline
$4$ & $17$ & $2$ & $$ & $1$ & $1$ &  &   \\
$4$ & $17$ & $3$ & $$ & $6$ & $6$ & $0.03$ s &   \\
$4$ & $17$ & $4$ & $$ & $3$ & $3$ & $< 0.01$ s &   \\
$4$ & $17$ & $5$ & $$ & $4$ & $4$ & $< 0.01$ s &   \\
$4$ & $17$ & $6$ & $$ & $0$ & $0$ & $< 0.01$ s &   \\
\hline
$4$ & $18$ & $2$ & $$ & $3$ & $1$ &  &   \\
$4$ & $18$ & $3$ & $$ & $79$ & $42$ & $0.08$ s &   \\
$4$ & $18$ & $4$ & $$ & $79$ & $31$ & $0.13$ s &   \\
$4$ & $18$ & $5$ & $$ & $201$ & $67$ & $0.08$ s &   \\
$4$ & $18$ & $6$ & $$ & $0$ & $0$ & $0.04$ s &   \\
\hline
$4$ & $19$ & $2$ & $$ & $7$ & $1$ &  &   \\
$4$ & $19$ & $3$ & $$ & $1\,365$ & $191$ & $0.51$ s &   \\
$4$ & $19$ & $4$ & $$ & $12\,368$ & $1\,995$ & $4.7$ s &   \\
$4$ & $19$ & $5$ & $$ & $74\,113$ & $1\,495$ & $16.1$ s &   \\
$4$ & $19$ & $6$ & $$ & $4$ & $4$ & $20.1$ s &   \\
$4$ & $19$ & $7$ & $$ & $0$ & $0$ & $< 0.01$ s &   \\
\hline
$4$ & $20$ & $2$ & $$ & $21$ & $1$ &  &   \\
$4$ & $20$ & $3$ & $$ & $30\,334$ & $183$ & $16.0$ s &   \\
$4$ & $20$ & $4$ & $$ & $6\,409\,721$ & $65\,517$ & $2265.4$ s &   \\
$4$ & $20$ & $5$ & $$ & $57\,544\,941$ & $214\,717$ & $12.1$ h &   \\
$4$ & $20$ & $6$ & $$ & $25\,760$ & $745$ & $32.6$ h &   \\
$4$ & $20$ & $7$ & $$ & $0$ & $0$ &  &   \\
\hline
$4$ & $21$ & $2$ & $$ & $47$ & $3$ &  &   \\
$4$ & $21$ & $3$ & $$ & $$ & $25\,763$ & $12.1$ s & \textdagger  \\
$4$ & $21$ & $4$ & $$ & $$ & $246\,546\,229$ & $21.7$ h & \textdagger  \\
$4$ & $21$ & $5$ & $$ & $$ & $19\,419\,386\,435$ & $228.0$ d & \textdagger  \\
$4$ & $21$ & $6$ & $$ & $$ & $3\,100\,200\,221$ & $2326.4$ d & \textdagger  \\
$4$ & $21$ & $7$ & $$ & $$ & $1\,005$ & $135.4$ d & \textdagger  \\
$4$ & $21$ & $8$ & $$ & $$ & $0$ & $2.9$ s & \textdagger  \\
\hline
$5$ & $26$ & $2$ & $$ & $1$ & $1$ &  &   \\
$5$ & $26$ & $3$ & $$ & $15$ & $15$ & $0.87$ s &   \\
$5$ & $26$ & $4$ & $$ & $3$ & $3$ & $0.07$ s &   \\
$5$ & $26$ & $5$ & $$ & $6$ & $6$ & $< 0.01$ s &   \\
$5$ & $26$ & $6$ & $$ & $6$ & $6$ & $0.01$ s &   \\
$5$ & $26$ & $7$ & $$ & $0$ & $0$ & $< 0.01$ s &   \\
\hline
$5$ & $27$ & $2$ & $$ & $3$ & $1$ &  &   \\
$5$ & $27$ & $3$ & $$ & $540$ & $347$ & $16.0$ s &   \\
$5$ & $27$ & $4$ & $$ & $385$ & $193$ & $7.2$ s &   \\
$5$ & $27$ & $5$ & $$ & $3\,104$ & $1\,240$ & $2.9$ s &   \\
$5$ & $27$ & $6$ & $$ & $11\,603$ & $3\,463$ & $14.1$ s &   \\
$5$ & $27$ & $7$ & $$ & $0$ & $0$ & $22.3$ s &   \\
\hline
$5$ & $28$ & $2$ & $$ & $7$ & $1$ &  &   \\
$5$ & $28$ & $3$ & $$ & $34\,318$ & $8\,042$ & $224.9$ s &   \\
$5$ & $28$ & $4$ & $$ & $263\,321$ & $70\,992$ & $894.5$ s &   \\
$5$ & $28$ & $5$ & $$ & $4\,388\,439$ & $210\,311$ & $2874.9$ s &   \\
$5$ & $28$ & $6$ & $$ & $75\,720\,344$ & $1\,455\,113$ & $12.7$ h &   \\
$5$ & $28$ & $7$ & $$ & $0$ & $0$ & $54.5$ h &   \\
\hline
$5$ & $29$ & $2$ & $$ & $21$ & $1$ &  &   \\
$5$ & $29$ & $3$ & $$ & $2\,243\,097$ & $69\,891$ & $5808.2$ s &   \\
$5$ & $29$ & $4$ & $3$ & $148\,843$ & $19\,884$ & $44.3$ h &   \\
$5$ & $29$ & $5$ & $2$ & $36\,022$ & $31\,315$ & $1565.3$ s &   \\
$5$ & $29$ & $6$ & $1$ & $120\,074$ & $119\,047$ & $403.5$ s &   \\
$5$ & $29$ & $7$ & $$ & $281$ & $258$ & $412.5$ s &   \\
$5$ & $29$ & $8$ & $$ & $0$ & $0$ & $1.0$ s &   \\
\hline
$5$ & $30$ & $2$ & $$ & $54$ & $1$ &  &   \\
$5$ & $30$ & $3$ & $$ & $$ & $78\,086$ & $3947.2$ s &   \\
$5$ & $30$ & $4$ & $$ & $$ & $3\,002\,015\,967$ & $60.8$ d &   \\
$5$ & $30$ & $5$ & $$ & $$ & $5\,501\,626\,305$ & $645.8$ d &   \\
$5$ & $30$ & $6$ & $$ & $$ & $197\,049\,834$ & $211.8$ d &   \\
$5$ & $30$ & $7$ & $$ & $$ & $18\,857$ & $165.0$ h &   \\
$5$ & $30$ & $8$ & $$ & $$ & $0$ & $72.9$ s &   \\
\hline
$6$ & $37$ & $2$ & $$ & $1$ & $1$ &  &   \\
$6$ & $37$ & $3$ & $$ & $231$ & $231$ & $6.0$ h &   \\
$6$ & $37$ & $4$ & $$ & $13$ & $13$ & $6.2$ s &   \\
$6$ & $37$ & $5$ & $$ & $0$ & $0$ & $0.10$ s &   \\
\hline
$6$ & $38$ & $2$ & $$ & $3$ & $1$ &  &   \\
$6$ & $38$ & $3$ & $$ & $30\,491$ & $21\,371$ & $156.0$ h &   \\
$6$ & $38$ & $4$ & $$ & $8\,865$ & $6\,215$ & $1074.0$ s &   \\
$6$ & $38$ & $5$ & $$ & $0$ & $0$ & $143.7$ s &   \\
\hline
$6$ & $39$ & $2$ & $$ & $7$ & $1$ &  &   \\
$6$ & $39$ & $3$ & $$ & $5\,128\,096$ & $1\,644\,791$ & $82.1$ d &   \\
$6$ & $39$ & $4$ & $$ & $48\,249\,923$ & $19\,197\,035$ & $211.7$ h &   \\
$6$ & $39$ & $5$ & $$ & $289$ & $158$ & $248.1$ h &   \\
$6$ & $39$ & $6$ & $$ & $0$ & $0$ & $4.3$ s &   \\
\hline
$6$ & $40$ & $2$ & $$ & $21$ & $1$ &  &   \\
$6$ & $40$ & $3$ & $$ & $747\,865\,015$ & $57\,025\,160$ & $362.3$ d &   \\
$6$ & $40$ & $4$ & $2$ & $471\,192\,731$ & $85\,773\,975$ & $19213.0$ d &   \\
$6$ & $40$ & $5$ & $1$ & $388$ & $128$ & $192.1$ d &   \\
$6$ & $40$ & $6$ & $$ & $0$ & $0$ & $10.9$ s &   \\
\hline
$6$ & $41$ & $2$ & $$ & $54$ & $1$ &  &   \\
$6$ & $41$ & $3$ & $$ & $$ & $581\,769\,269$ & $756.9$ d &   \\
$6$ & $41$ & $4$ & $3$ & $$ & $1\,771\,354\,037$ & $3750.0$ d & \textdagger  \\
$6$ & $41$ & $5$ & $2$ & $$ & $61\,351$ & $541.7$ d & \textdagger  \\
$6$ & $41$ & $6$ & $1$ & $$ & $16$ & $1154.5$ s & \textdagger  \\
$6$ & $41$ & $7$ & $$ & $$ & $0$ & $0.33$ s &   \\
\hline

\end{longtable}

\subsection{Double counting}

To increase confidence in the computational results, we perform a
consistency check of the results by double counting. After the search
starting from $\CA(N;k,v)$ is performed, we count
in two ways the total number of $\CA(N;k+1,v)$ that obey the
restrictions used, that is, in some cases we count only uniform arrays
and in some cases only arrays that have $\delta$ covers of size $v$ that
intersect pairwise in exactly one row.

The first way is to use the classification results to and the orbit-stabilizer theorem to obtain
\[
\sum_{C'} \frac{(k+1)! v!^{k+1}}{|\Aut{C'}|},
\]
where $(k+1)! v!^{k+1}$ is the order of the group of symmetries in that case and the sum is taken over equivalence class representatives $C'$ of that case.

The second way is to use numbers that were stored during the
search. Consider first a modified search that starts from all
$k$-column arrays instead of equivalence class representatives and
considers all possible permutations of symbols in the last column for
each partition. If the techniques for rejecting candidates of
$\mathcal{D}$ in Section~\ref{sec:comp_isomorph} would not be used, then every
$(k+1)$-column covering array would appear exactly once when adding one more column.

If the additional condition on the largest multiplicity of a symbol in the
last column is taken into account, then the proportion of $(k+1)$-column arrays equivalent to $C'$ that
enter the isomorph rejection phase is the proportion of columns in $C'$
for which the largest multiplicity of a symbol is smallest, denoted by
$\alpha(C')$. Further, in the search starting from a fixed
$k$-column array $C$, let $\beta(C, C')$ be the proportion of all
partitions equivalent to $C'$ that pass the check~\ref{enum:chi} in
Section~\ref{sec:comp_isomorph}; this can be obtained at the stage in
the search when all partitions of $C$ equivalent to $C'$ are
considered. The total count of $(k+1)$-column arrays would now be
\[
\sum_{C,C'} \frac{1}{\alpha(C') \beta(C,C')},
\]
where the sum is taken over all $k$-column arrays $C$ and all $C'$ that are extensions of $C$ and pass the check for Condition~\ref{enum:chi}.
The remaining techniques for rejecting covers in $\mathcal{D}$ described in Section~\ref{sec:comp_isomorph} do not reject any candidates that satisfy Condition~\ref{enum:chi}, so including them in the search does not change this count.

In the modified search, arrays $C'$ which differ only by a permutation
of symbols in the last column contribute the same amount in the sum,
and the searches starting from two equivalent $C$ contribute the same
amount to the sum. Let $S(C')$ be the number of ways to assign the
symbols to the last column (this equals $v!$ if no two
parts in the corresponding partition of $C'$ are equal). Further, the size of the equivalence
class of $C$ is ${k! v!^k}/{|\Aut{C}|}$. In the actual
search, the count is then obtained as
\[
\sum_{C,C'} \frac{k! v!^{k}}{|\Aut{C}|} S(C') \frac{1}{\alpha(C') \beta(C,C')},
\]
where the sum is taken over the $k$-column arrays $C$ that are used in the search, and all $C'$ that are extensions of $C$ and pass the check for Condition~\ref{enum:chi}.

\section{Discussion of results}\label{resultsSec}

\begin{table}
\begin{center}
\begin{tabular}{cccccccc}\\\hline
$v$ & $k=4$ & $k=5$ & $k=6$ & $k=7$ & $k=8$ & $k=9$ & $k=10$\\\hline
3 &  9  & $^j11^i$ & $^l12^c$ & $^c12^k$  & $^a13^c$ & $^c13^k$ & $^a14^h$\\
4 & 16  & 16 & $^l19^k$ & $^a21^k$ & $^c21$--$22^c$ & $^c21$--$22^f$ & $^c21$--$24^h$\\
5 & 25  & 25 & 25     & $^l29^k$     & $^b30$--$33^h$ & $^c30$--$35^h$ & $^c30$--$36^e$\\
6 & $^m37^k$ & $^a39^d$ &$^b41^h$ & $^c41$--$42^c$& $^c41$--$42^h$& $^c41$--$46^g$& $^c41$--$48^n$\\\hline
\end{tabular}
\end{center}
Unmarked entries: orthogonal arrays. Captions: a) This paper, preliminarily announced in \cite{NO00}, b) This paper, c) $\CAN(k,v) \leq \CAN(k+1,v)$, d) \cite{coh1}, e) L. Rouse-Lamarre, reported in \cite{col2}, f) \cite{lob}, g) \cite{mea1}, h) \cite{nur}, i) \cite{osta}, j) Applegate, reported in \cite{slo}, k) \cite{ste3}, l) \cite{ste2}, m) \cite{tarry}, n) \cite{iimjtjhag}
%\end{tabular}
%\end{center}
\caption{Values of $\CAN(k,v)$ for $4 \leq k \leq 10$ and $3 \leq v \leq 6$} \label{tab:res}
\end{table}

A summary of the current knowledge of the sizes of optimal coverings
array for small $k$ and $v$ is given in Table~\ref{tab:res}. In the table there
are captions for all bounds, except those that follow from orthogonal arrays:
$\CAN(k,v) = v^2$ when $v$ is a prime power and $k \leq v+1$. Some of the lower
bounds attributed to the current work were obtained about two decades before this
paper appears in print. Those bounds, which were announced at a conference in
2000 \cite{NO00}, are given a caption of their own to clarify priority issues.
Some of those results have later been rediscovered
\cite{coh0,col2,tor1,tor2}.  Additionally our computer search
established that a $\CA(14;11,3)$ does not exist. The fourth author found a
$\CA(15; 20,3)$ \cite{nur} so we additionally know that $\CAN(k,3) = 15$ for $11 \leq k \leq 20$.

In the process of preparing this paper we noticed that the bound
sources listed in Table~2 of \cite{nur} are not the same as those in
Table~1 but this difference is not articulated in that article.  To
the best of our knowledge in Table~2 of \cite{nur}, b refers to
\cite{ste}, c is \cite{nur} and d is \cite{ste4}.  On page 149 of
\cite{nur}, ``giving the bounds marked with d in the tables'' should
read ``giving the bounds marked with b in Table~1 and c in Table~2''.

In every case in which we
determined the size of an optimal covering array by construction, we
also determined that there exists a uniform covering array of the same
size. These results continue to support the conjecture that optimal
covering arrays can be found amongst the uniform covering arrays.

For the parameters $\CA(11;5,3)$, $\CA(12;7,3)$, $\CA(13;8,3)$,
$\CA(13;9,3)$, $\CA(19;6,4)$, and $\CA(37;4,6)$ every optimal array
is also uniform.  However for the optimal parameters, $\CA(12;6,3)$,
$\CA(14;10,3)$, $\CA(29;7,5)$, and $\CA(39;5,6)$ both uniform and
non-uniform examples exist. Finally, for the optimal parameters
$\CA(21;7,4)$ and $\CA(41;6,6)$ we know that uniform arrays exist, but we do
not know if non-uniform examples also exist.

For the four optimal parameter sets where both uniform and non-uniform arrays exist, $\CA(12;6,3)$,
$\CA(14;10,3)$, $\CA(29;7,5)$, and $\CA(39;5,6)$, the percentages of non-isomorphic arrays that are uniform are 84.61, 91.69, 91.81, and 54.67, respectively. Combined with the optimal parameter sets where every array is uniform we can see some provisional trends.  The smallest optimal parameter set for which non-uniform arrays exist is $\CA(12;6,3)$, for which $N$ is the smallest for the given $k$, but $k$ is not the maximal possible given $N$.  We guess that non-uniform arrays will be more abundant when $k$ is not maximal for a given $N$. The second potential trend is that for a fixed $v$, as $N$ and $k$ increase there are likely to be more non-uniform optimal arrays.  These are only limited observations from few data. 

Table~\ref{tab:res_u} shows the current state of knowledge for uniform
covering arrays. All entries are lower bounds, bold entries show arrays known to exist,
and underlined entries indicate a lower bound matching Theorem~\ref{main_bound_thm}.
The lower bounds from Theorem~\ref{main_bound_thm} meet six known
uniform covering arrays.

\begin{table}
\begin{center}
\begin{tabular}{@{}cccccccc}\\\hline\hline
  $v$ & $k=4$ & $k=5$ & $k=6$ & $k=7$ & $k=8$ & $k=9$ & $k=10$\\\hline
  3&\textbf{\underline{9}}& \textbf{\underline{11}}& \textbf{\underline{12}}& \textbf{\underline{12}}& {\bf 13}& {\bf 13}& {\bf 14} \\\hline
 4& \textbf{\underline{16}}& \textbf{\underline{16}}& \textbf{\underline{19}}& \textbf{\underline{21}}& 22& 22& 22 \\\hline
 5& \textbf{\underline{25}}& \textbf{\underline{25}}& \textbf{\underline{25}}& \textbf{\underline{29}}& \underline{31}& \underline{32}& \underline{32} \\\hline
 6& {\bf 37}& {\bf 39}& {\bf 41}&  42&  42& \underline{44}& \underline{45}\\\hline
\end{tabular}
\end{center}
\caption{Lower bounds on $\UCAN(k,v)$ for $4 \leq k \leq 10$ and $3 \leq v \leq 6$} \label{tab:res_u}
\end{table}

A lower bound in Table~\ref{tab:res} that is smaller than the corresponding 
lower bound in Table~\ref{tab:res_u} indicates a candidate for a covering array
that would refute Conjecture~\ref{conj:uca}.

Conjecture~\ref{conj:uca} and Corollary~\ref{v+2_cor} would imply that
$\CAN(v+2,v) \geq v^2+v-1$, which has also been conjectured in
\cite{ste}. When $v$ is a prime power, this would mean that
$\CAN(v+2,v)-\CAN(v+1,v) \geq v-1$, which is a very large jump for
only adding a single column.  In the case of $v=6$, from
Table~\ref{tab:res} we can see that the value of $\CAN(v+1,v)$ may be
close to the value of $\CAN(v+2,v)$ when $v$ is not a prime power.

One exciting possibility is that $\CAN(8,6)$ could be 41, meeting the
bound from Theorem~\ref{main_bound_thm}; $\CAN(8,6)$ is no more than
42.  This suggests that the influence of the prime power status of $v$
disappears very rapidly as $k$ increases past $v+1$.  However, none
of the $\UCA(41;6,6)$ covering arrays can be extended to a
$\CA(41;8,6)$, so if they exist, then no subarray with six columns can
be uniform.  This implies that at least three columns of a possible $\CA(41;8,6)$ must be non-uniform.  Since there are six non-uniform partitions of 41 into six parts of size at least 6 there are 57 different partition patterns if only three columns are non-uniform.  If more columns are non-uniform, the number of cases increases. This indicates that exploiting this structure in an exhaustive search may not be efficient.  Exploiting it with a metaheuristic search could be an option.

Conjecture~\ref{conj:uca} predicts that for every covering array,
there a uniform covering array with the same parameters. We have seen
examples of optimal covering arrays which are not uniform, so we know
that not every optimal covering array is uniform. But we can ask for
which parameters are all the optimal covering arrays uniform? In this
paper we found many examples, but our examples are in cases where the
number of rows is relatively small. When $v$ is a prime power it is
possible to construct a $\CA(v^2+i(v^2-v),v^i(v+1),v)$ for any $i$
using a recursive construction and starting with an
orthogonal array (see, for example, \cite{ste}). We suspect that these parameters could be good
candidates for having every optimal covering arrays be a uniform
covering array. 

A significant result from our work is that the number of known optimal
covering arrays for $v>2$ and $N > v^2$ is now 21 whereas before it
was eleven. Additionally the $\UCA(21; 7,4)$ meet the bound from
Theorem~\ref{main_bound_thm}. This is the first example of tightness
and the implied structure, when $k > v+2$. The classification results
from our searches are available at~\cite{KMNNOS}.

In this paper we only consider strength-2 covering arrays. Many of the
questions addressed in the paper may be interesting for higher
strength covering arrays. The definition of ``uniform'' applies to
covering arrays of any size and it is interesting to ask if it is
always possible to find an optimal covering array, of any strength, that is also uniform. Extending Theorem~\ref{main_bound_thm} to strength $t$ would require counting pairs of rows which agree in at most $t-1$ positions and would be an interesting investigation. For strength $t>2$, extending an array with an additional column requires determining all of the strength $t-1$ subarrays which is more computationally demanding as $t$ increase. The use of \emph{nauty} to compute automorphism groups depends mainly on the sizes of the arrays and is not inherently more complicated as the strength increases.  For classification and fully understanding the structure of optimal arrays, we do not see better options than exhaustive search.

\section*{Acknowledgment}
The authors wish to thank the referees
for useful comments that helped improve this article.


\begin{thebibliography}{XX}
\bibitem{akh0} Y. Akhtar, S. Maity, and R. C. Chandrasekharan,
  Covering arrays of strength four and software testing, in:
  R. N. Mohapatra, D. R. Chowdhury, and D. Giri (Eds.), Mathematics
  and Computing, Springer Proc. Math. Stat. 139, Springer, New Delhi,
  2015, pp.~391--398.

\bibitem{coh0} M.B. Cohen, personal communication 2014.

\bibitem{coh1} M.B. Cohen, Designing test suites for software
  interaction testing, Ph.D. Thesis, University of Auckland, 2004.

\bibitem{col0} C. J. Colbourn, Combinatorial aspects of covering
  arrays, Le Matematiche (Catania), {\bf 59} (2006), 125--172.

\bibitem{col3} C. J. Colbourn, Augmentation of covering arrays of
  strength two, Graphs Combin. {\bf 31} (2015), 2137--2147.

\bibitem{mr3440525} C. J. Colbourn, Suitable permutations, binary
  covering arrays, and Paley matrices, Springer Proc. Math. Stat. {\bf
    133} (2015), 29--42.

%\bibitem{col4} C. J. Colbourn and J. H. Dinitz, Making the MOLS table,
 % in: W. D. Wallis (Ed.), Computational and Constructive Design
 % Theory, Math. Appl. 368, Kluwer, Dordrecht, 1996, pp.~67--134.

\bibitem{col2} C. J. Colbourn, G. K\'{e}ri, P. P. R. Soriano, and
  J.-C. Schlage-Puchta, Covering and radius-covering arrays:
  constructions and classification, Discrete Appl. Math. {\bf 158}
  (2010), 1158--1180.

\bibitem{colbourn_asymptotic_nodate} C. J. Colbourn, K. Sarkar, and
  E. Lanus, Asymptotic and constructive methods for covering perfect
  hash families and covering arrays, Des. Codes Cryptogr. {\bf 86}
  (2018), 907--937.

\bibitem{danziger_covering_2009} P. Danziger, E. Mendelsohn, L. Moura,
  and B. Stevens, Covering arrays avoiding forbidden edges,
  Theoret. Comput. Sci. {\bf 410} (2009), 5403--5414.

\bibitem{francetic2016} N. Franceti\'{c} and B. Stevens, Asymptotic
  size of covering arrays: an application of entropy compression,
  J. Combin. Des. {\bf 25} (2017), 243--257.

\bibitem{hart0} A. Hartman, Software and hardware testing using
  combinatorial covering suites, in: M. C. Golumbic and
  I. B.-A. Hartman (Eds.), Graph Theory, Combinatorics and Algorithms,
  Springer, New York, 2005, pp.~237--266.

\bibitem{MR2224851} B. Hnich, S. D. Prestwich, E. Selensky,
  and B. M. Smith, {Constraint models for the covering test
    problem}, Constraints {\bf 11} (2006), 199--219.

\bibitem{iimjtjhag} I. Izquierdo-Marquez, J. Torres-Jimenez, and
  H. Avila-George, New upper bounds for pairwise senary test-suites,
  submitted for publication.
  %to The International Arab Journal of Information
  %Technology, 2018.

\bibitem{kas} P. Kaski and P. R. J. {\"O}sterg{\aa}rd, Classification
  Algorithms for Codes and Designs, Springer, Berlin, 2006.

\bibitem{kas2} P. Kaski and O. Pottonen, libexact user's guide,
  Version 1.0, Helsinki Institute for Information Technology HIIT,
  Helsinki, 2008.

\bibitem{katona} G. O. H. Katona, Two applications (for search theory
  and truth functions) of Sperner type theorems,
  Period. Math. Hungar. 3 (1973), 19--26.

\bibitem{kleitmanspencer} D. J. Kleitman and J. Spencer, Families of
  $k$-independent sets, Discrete Math. {\bf 6} (1973), 255--262.

\bibitem{KMNNOS} J. I. Kokkala, K. Meagher, R. Naserasr,
  K. J. Nurmela, P. R. J. \"{O}sterg{\aa}rd, and B. Stevens, Dataset
  for On the structure of small strength-2 covering arrays
  [Dataset]. Zenodo.  https://doi.org/10.5281/zenodo.1476059 (October
  31, 2018).

\bibitem{law0} J. Lawrence, R. N. Kacker, Y. Lei, D. R. Kuhn, and
  M. Forbes, A survey of binary covering arrays,
  Electron. J. Combin. {\bf 18} (2011), P84.

\bibitem{lob} J. R. Lobb, C. J. Colbourn, P. Danziger, B. Stevens, and
  J. Torres-Jimenez, Cover starters for covering arrays of strength
  two, Discrete Math. {\bf 312} (2012), 943--956.

\bibitem{mck2} B. D. McKay, Isomorph-free exhaustive generation,
  J. Algorithms {\bf 26} (1998), 306--324.

\bibitem{mck_pip} B. D. McKay and A. Piperno, Practical graph
  isomorphism, II, J. Symbolic Comput. {\bf 60} (2014), 94--112.

\bibitem{mea0} K. Meagher and B. Stevens, Covering arrays on graphs,
  J. Combin. Theory Ser. B {\bf 95} (2005), 134--151.

\bibitem{mea1} K. Meagher and B. Stevens, Group construction of
  covering arrays, J. Combin. Des. {\bf 13} (2005), 70--77.

%\bibitem{mur0} P. C. Murray and C. J. Colbourn, Sequence covering
 % arrays and linear extensions, in: J. Kratochv\'{i}l, M. Miller and
%  D. Froncek (Eds.), Combinatorial Algorithms, LNCS 8986, Springer,
 % Cham, 2015, pp.~274--285.

\bibitem{MR2974273} P. Nayeri, C. J. Colbourn, and G.
  Konjevod, Randomized post-optimization of covering arrays, European
  J. Combin. {\bf 34} (2013), 91--103.

\bibitem{NO03} S.~Niskanen, P.~R.~J. {\"{O}}sterg{\aa}rd, Cliquer
  User's Guide, Version 1.0, Tech. Rep. T48, Communications
  Laboratory, Helsinki University of Technology, Espoo, 2003.

\bibitem{nur} K. J. Nurmela, Upper bounds for covering arrays by tabu
  search, Discrete Appl. Math. {\bf 138} (2004), 143--152.

\bibitem{NO00} K. J. Nurmela and P. R. J. \"Osterg{\aa}rd, Lower
  bounds on 2-covering arrays by exhaustive search, presented at the
  \emph{25th Australasian Conference on Combinatorial Mathematics and
    Combinatorial Computing} (Christchurch, New Zealand, December
  4--8, 2000).

\bibitem{osta} P. R. J. \"Osterg{\aa}rd, Constructions of mixed
  covering codes, Research Report A18, Digital Systems Laboratory,
  Helsinki University of Technology, Espoo, 1991.

\bibitem{ost0} P. R. J. \"Osterg{\aa}rd, Disproof of a conjecture on
  the existence of balanced optimal covering codes, IEEE
  Trans. Inform. Theory {\bf 49} (2003), 487--488.

\bibitem{ost} P. R. J. \"Osterg\aa rd, On optimal binary codes with
  unbalanced coordinates, Appl. Algebra Engrg. Comm. Comput. {\bf 24}
  (2013), 197--200.

\bibitem{OBK99} P.~R.~J. \"{O}sterg{\aa}rd, T.~Baicheva, and E.~Kolev,
  Optimal binary one-error-correcting codes of length 10 have 72
  codewords, IEEE Trans.  Inform. Theory {\bf 45} (1999), 1229--1231.

\bibitem{sarkar_upper_2017} K. Sarkar and C. J Colbourn, Upper bounds
  on the size of covering arrays, SIAM J. Discrete Math. {\bf 31}
  (2017), 1277--1293.

\bibitem{mr3565209} K. Sarkar, C. J. Colbourn, A. de Bonis, and
  U. Vaccaro, Partial covering arrays: algorithms and asymptotics, in:
  V. M\"{a}kinen, S. J. Puglisi and L. Salmela (Eds.), Combinatorial
  Algorithms, LNCS 9843, Springer, Cham, 2016, pp. 437--448.

\bibitem{slo} N. J. A. Sloane, Covering arrays and intersecting codes,
  J. Combin. Des. \textbf{1} (1993), 51--63.

\bibitem{ste} B. Stevens, Transversal Covers and Packings,
  Ph.D. Thesis, University of Toronto, Toronto, 1998.

\bibitem{set_nonu} B. Stevens, Non-uniform covering array with
  symmetric forbidden edge constraints, preprint available at
  \url{https://arxiv.org/abs/1901.02479}, submitted for publication.
  %Submitted to
  %arXiv e-prints (2019)
  %arXiv:1901.02479, available at
  %\url{https://arxiv.org/abs/1901.02479}. Submitted to
  %Australas. J. Combin.

\bibitem{ste4} B. Stevens, A. Ling and E. Mendelsohn, A direct
  construction of transversal covers using group divisible designs,
  Ars Combin. {\bf 63} (2002), 145--159.

\bibitem{ste3} B. Stevens and E. Mendelsohn, New recursive methods for
  transversal covers, J. Combin. Des. {\bf 7} (1999), 185--203.

\bibitem{ste2} B. Stevens, L. Moura, and E. Mendelsohn, Lower bounds
  for transversal covers, Des. Codes Cryptogr. {\bf 15} (1999),
  279--299.
 
\bibitem{tarry} G. Tarry, Le Probl\`{e}m des 36 Officiers,
  C. R. Assoc. Fr. Av. Sci. {\bf 29}(2) (1900), 170--203.

\bibitem{tor1} J. Torres-Jimenez, personal communication, 2016.

\bibitem{tor2} J. Torres-Jimenez and I. Izquierdo-Marquez,
  Construction of non-isomorphic covering arrays, Discrete
  Math. Algorithms Appl. \textbf{8} (2016), 1650033.

\bibitem{tor3} J. Torres-Jimenez and E. Rodriguez-Tello, New bounds
  for binary covering arrays using simulated annealing,
  Inform. Sci. {\bf 185} (2012), 137--152.

\bibitem{tza1} G. Tzanakis, L. Moura, D. Panario, and B. Stevens,
  Covering arrays from \mbox{m-sequences} and character sums, Des. Codes
  Cryptogr. {\bf 85} (2017), 437--456.

\bibitem{MR3328867} R. Yuan, Z. Koch, and A. Godbole, Covering array
  bounds using analytical techniques, Congr. Numer. {\bf 222} (2014),
  65--73.

\end{thebibliography}
\end{document}